\providecommand{\U}[1]{\protect \rule{.1in}{.1in}}
\theoremstyle{plain}
\newtheorem{theorem}{Theorem}[section]
\newtheorem{corollary}[theorem]{Corollary}
\newtheorem{definition}[theorem]{Definition}
\newtheorem{example}[theorem]{Example}
\newtheorem{lemma}[theorem]{Lemma}
\newtheorem{proposition}[theorem]{Proposition}
\newtheorem{remark}[theorem]{Remark}
\numberwithin{equation}  {section}
\begin{document}
\title[Beurling Theorem]{A General Beurling-Helson-Lowdenslager Theorem on the Disk}
\author{Yanni Chen }
\address{Department of Mathematics, University of New Hampshire, Durham, NH 03824, U.S.A.}
\email{unhcyn@gmail.com}
\thanks{*Corresponding author email address: unhcyn@gmail.com }
\subjclass[2000]{Primary 46L52, 30H10; Secondary 47A15}
\keywords{Hardy space, $\| \|_{1}$-dominating normalized gauge norm, dual space,
invariant subspace, Beurling-Helson-Lowdenslager theorem}

\begin{abstract}
The classical Beurling-Helson-Lowdenslager theorem characterizes the
shift-invariant subspaces of the Hardy space $H^{2}$ and of the Lebesgue space $L^{2}$. In this
paper, which is self-contained, we define a very general class of norms
$\alpha$ and define spaces $H^{\alpha}$ and $L^{\alpha}.$ We then extend the
Beurling-Helson-Lowdenslager invariant subspace theorem.  The idea of the
proof is new and quite simple; most of the details involve extending basic
well-known $\left \Vert {}\right \Vert _{p}$-results for our more general norms.

\end{abstract}
\maketitle

\section{Introduction}

Why should we care about invariant subspaces? In finite dimensions all of the
structure theorems for operators can be expressed in terms of invariant
subspaces. For example the statement that every $n\times n$ complex matrix $T$
is unitarily equivalent to an upper triangular matrix is equivalent to the
existence of a chain $M_{0}\subset M_{1}\subset \cdots \subset M_{n}$ of
$T$-invariant linear subspaces with $\dim M_{k}=k$ for $0\leq k\leq n$. Since
every upper triangular normal matrix is diagonal, the preceding result yields
the spectral theorem. A matrix is similar to a single Jordan block if and only
if its set of invariant subspaces is linearly ordered by inclusion, so the
Jordan canonical form can be completely described in terms of invariant
subspaces. In \cite{BF} L. Brickman and P. A. Fillmore describe the lattice of
all invariant subspaces of an arbitrary matrix.

In infinite dimensions, where we consider closed subspaces and bounded
operators, even the existence of one nontrivial invariant subspace remains an
open problem for Hilbert spaces. If $T$ is a normal operator with a $\ast
$-cyclic vector, then, by the spectral theorem, $T$ is unitarily equivalent to
the multiplication operator $M_{z}$ on $L^{2}\left(  \sigma \left(  T\right)
,\mu \right)  ,$ i.e.,
\[
\left(  M_{z}f\right)  \left(  z\right)  =zf\left(  z\right)  ,
\]
where $\mu$ is a probability Borel measure on the spectrum $\sigma \left(
T\right)  $ of $T.$ In this case von Neumann proved that if a subspace $W$
that is invariant for $M_{z}$ and for $M_{z}^{\ast}=M_{\bar{z}}$, then the
projection $P$ onto $W$ is in the commutant of $M_{z},$ which is the maximal
abelian algebra $\left \{  M_{\varphi}:\varphi \in L^{\infty}\left(  \mu \right)
\right \}  $. Hence there is a Borel subset $E$ of $\sigma \left(  T\right)  $
such that $P=M_{\chi_{E}}$, which implies $W=\chi_{E}L^{2}\left(  \mu \right)
$. It follows that if $T$ is a \emph{reductive} normal operator, i.e., every
invariant subspace for $T$ is invariant for $T^{\ast}$, then all invariant
subspaces of $T$ have the form $\chi_{E}L^{2}\left(  \mu \right)  .$ In
\cite{S} D. Sarason characterized the $\left(  M_{z},\mu \right)  $ that are
reductive; in particular, when $T=M_{z}$ is unitary (i.e., $\sigma \left(
T\right)  \subset \mathbb{T}=\left \{  \lambda \in \mathbb{C}:\left \vert
\lambda \right \vert =1\right \}  $), then $M_{z}$ is reductive if and only if
Haar measure $m$ on $\mathbb{T}$ is not absolutely continuous with respect to
$\mu$. When $\sigma \left(  T\right)  =\mathbb{T}$ and $\mu=m$ is Haar measure
on $\mathbb{T}$, then $M_{z}$ on $L^{2}$ is the bilateral shift operator.

If $T$ is the restriction of a normal operator to an invariant subspace with a
cyclic vector $e$, then there is a probability space $L^{2}\left(
\sigma \left(  T\right)  ,\mu \right)  $ such that $T$ is unitarily equivalent
to $M_{z}$ restricted to $P^{2}\left(  \mu \right)  $, the closure of the
polynomials in $L^{2}\left(  \mu \right)  $, and where $e$ corresponds to the
constant function $1\in P^{2}\left(  \mu \right)  $. If $\sigma \left(
T\right)  =\mathbb{T}$ and $\mu=m$, then $P^{2}\left(  \mu \right)  $ is the
classical Hardy space $H^{2}$ and $M_{z}$ is the unilateral shift operator.

In infinite dimensions the first important characterization of all the
invariant subspaces of a non-normal operator, the unilateral shift, was due to
A. Beurling \cite{B} in 1949. His result was extended by H. Helson and D.
Lowdenslager \cite{HL} to the bilateral shift operator, which is a
non-reductive unitary operator.

In this paper, suppose $\mathbb{D}$ is the unit disk in the complex plane
$\mathbb{C},$ and $m$ is Haar measure (i.e., normalized arc length) on the
unit circle $\mathbb{T}=\left \{  \lambda \in \mathbb{C}:\left \vert
\lambda \right \vert =1\right \}  $. We let $\mathbb{R},$ $\mathbb{Z}$,
$\mathbb{N}$, respectively, denote the sets of real numbers, integers, and
positive integers. Since $\left \{  z^{n}:n\in \mathbb{Z}\right \}  $ is an
orthonormal basis for $L^{2},$ we see that $M_{z}$ is a \emph{bilateral shift}
operator. The subspace $H^{2}$ which is the closed span of $\left \{
z^{n}:n\geq0\right \}  $ is invariant for $M_{z}$ and the restriction of
$M_{z}$ to $H^{2}$ is a \emph{unilateral shift} operator. A closed linear
subspace $W$ of $L^{2}$ is \emph{doubly invariant} if $zW\subseteq W$ and
$\bar{z}W\subseteq W$. Since $\bar{z}z=1$ on $\mathbb{T}$, $W$ is doubly
invariant if and only if $zW=W$. Since the set of polynomials in $z$ and
$\bar{z}$ is weak*-dense in $L^{\infty}=L^{\infty}\left(  \mathbb{T}\right)
,$ and since the weak* topology on $L^{\infty}$ coincides with the weak
operator topology on $L^{\infty}$ (acting as multiplication operators on
$L^{2}$), $W$ is doubly invariant if and only if $L^{\infty}\mathbb{\cdot
}W\subseteq W$. A subspace $W$ is \emph{simply invariant} if $zW\subsetneqq
W$, which means $H^{\infty}\cdot W\subseteq W$, but $\bar{z}W\nsubseteq W$.

We state the classical Beurling-Helson-Lowdenslager theorem for a closed
subspace $W$ of $L^{2}$. A very short elegant proof is given in \cite{N}. We
give a short proof to make this paper self-contained.

\begin{theorem}
\textbf{(Beurling-Helson-Lowdenslager)\label{BHL} }Suppose $W$ is a closed
linear subspace of $L^{2}$ and $zW\subset W$. Then

\begin{enumerate}
\item if $W$ is doubly invariant, then $W=\chi_{E}L^{2}$ for some Borel subset
$E$ of $\mathbb{T};$

\item if $W\ $is simply invariant, then $W=\varphi H^{2}$ for some $\varphi \in
L^{\infty}$ with $\left \vert \varphi \right \vert =1$ a.e. $\left(  m\right)  ;$

\item if $0\neq W\subseteq H^{2}$, then $W=\varphi H^{2}$ with $\varphi$ an
inner function (i.e., $\varphi \in H^{\infty}$ and $\left \vert \varphi
\right \vert =1$ a.e. $\left(  m\right)  $).
\end{enumerate}
\end{theorem}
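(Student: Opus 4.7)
For part (1), the plan is to use that double invariance means $W$ is invariant under multiplication by every trigonometric polynomial, hence (by weak* density and the fact that weak* convergence on $L^{\infty}$ coincides with the weak operator topology on $L^{2}$) under multiplication by every $\varphi\in L^{\infty}$. Let $P$ be the orthogonal projection of $L^{2}$ onto $W$; I would first check $PM_{\varphi}=M_{\varphi}P$ for every $\varphi\in L^{\infty}$, which places $P$ in the commutant of the maximal abelian algebra $\{M_{\varphi}:\varphi\in L^{\infty}\}$. Hence $P=M_{\psi}$ for some $\psi\in L^{\infty}$, and since $P=P^{2}=P^{\ast}$ the function $\psi$ is $0$-$1$ valued, i.e., $\psi=\chi_{E}$ for a Borel set $E\subseteq\mathbb{T}$, giving $W=\chi_{E}L^{2}$.

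For part (2), the key move is to exploit the strict inclusion $zW\subsetneq W$ by choosing a unit vector $\varphi\in W\ominus zW$. I would first prove $|\varphi|=1$ a.e.\ by a Fourier-coefficient computation: for each $n\geq 1$, $z^{n-1}\varphi\in W$, so $\varphi\perp z\cdot z^{n-1}\varphi=z^{n}\varphi$, which says $\int|\varphi|^{2}\overline{z^{n}}\,dm=0$; taking conjugates handles the negative indices, and the value at $n=0$ is $\|\varphi\|_{2}^{2}=1$. Thus every Fourier coefficient of $|\varphi|^{2}$ vanishes except the constant term, so $|\varphi|^{2}\equiv 1$.

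Once $|\varphi|=1$, multiplication by $\varphi$ is an $L^{2}$ isometry, so $\varphi H^{2}$ is closed; since $z^{n}\varphi\in W$ for every $n\geq 0$ and $W$ is closed, $\varphi H^{2}\subseteq W$. For the reverse inclusion I would show $W\ominus\varphi H^{2}=\{0\}$: if $g\in W\ominus\varphi H^{2}$, then on the one hand $g\perp\varphi z^{n}$ for $n\geq 0$ forces the Fourier coefficients $\widehat{g\bar{\varphi}}(n)=0$ for $n\geq 0$, and on the other hand the choice $\varphi\in W\ominus zW$ gives $\varphi\perp z^{n}g$ for $n\geq 1$, which after conjugation yields $\widehat{g\bar{\varphi}}(n)=0$ for $n\leq -1$. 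Hence $g\bar{\varphi}=0$ a.e., and since $|\varphi|=1$ this forces $g=0$. Part (3) then drops out: if $0\neq W\subseteq H^{2}$ then $W$ cannot be doubly invariant (because $\bar{z}^{n}W\subseteq H^{2}$ for all $n$ would force every $f\in W$ to be $0$), so part (2) applies; moreover $\varphi\in W\subseteq H^{2}$ combined with $|\varphi|=1$ gives $\varphi\in H^{\infty}$, i.e., $\varphi$ is inner.

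I expect the main obstacle to be the Fourier-coefficient double-sided cancellation in part (2)—specifically, recognizing that one needs the two orthogonality relations (against $\varphi H^{2}$ and from $\varphi\perp zW$) to cover the positive and negative indices respectively, and correctly bookkeeping the conjugation relation $\widehat{f\bar{g}}(n)=\overline{\widehat{g\bar{f}}(-n)}$. The doubly invariant case is short because it is essentially the commutant theorem, and part (3) is a corollary; the real content is the extremal-function construction $\varphi\in W\ominus zW$ together with the Fourier argument showing it is unimodular and cyclic for $W$.
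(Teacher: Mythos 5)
Your proof is correct and follows essentially the same route as the paper's: part (1) is the von Neumann commutant argument, part (2) rests on a unit vector $\varphi$ in the wandering subspace $W\ominus zW$ shown to be unimodular and cyclic for $W$ by the same two-sided Fourier-coefficient computation, and part (3) is the same corollary. The only cosmetic difference is that you take $\varphi\in W\ominus zW$ directly and show $W\ominus\varphi H^{2}=\{0\}$, whereas the paper reaches the same wandering vector by quoting the Halmos--Wold--Kolmogorov decomposition and then shows the unitary summand vanishes; the two are interchangeable.
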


\begin{proof}
1. This follows from von Neumann's result discussed above.

2. If $W$ is simply invariant, then $M_{z}|W$ is a nonunitary isometry, which,
by the Halmos-Wold-Kolmogorov decomposition must be a direct sum of at least
one unilateral shift and an isometry. Thus $W=W_{1}\oplus W_{2}$ and there is
a unit vector $\varphi \in W_{1}$ with $\left \{  z^{n}\varphi:n\geq0\right \}  $
an orthonormal basis for $W_{1}$. Since $\varphi \bot z^{n}\varphi$ for all
$n\geq1,$ we have%
\[
\int_{\mathbb{T}}\left \vert \varphi \right \vert ^{2}z^{n}dm=0
\]
for all $n\geq1$, and taking conjugates, we also get the same result for all
$n\leq-1,$ which implies that $|\varphi(z)|^{2}=\sum_{n=-\infty
}^{\infty}c_{n}z^{n}=c_{0}.$ Since $\varphi$ is a unit vector, we have
$|\varphi|^{2}=1$ a.e. $\left(  m\right)  $. Hence,
\[
W_{1}=\varphi \cdot \overline{sp}\left(  \left \{  z^{n}:n\geq0\right \}  \right)
=\varphi H^{2}.
\]
If $g$ is a unit vector in $W_{2},$ then we have $z^{n}\varphi$ $\bot$ $g$ and
$\varphi$ $\bot$ $z^{n}g$ for $n\geq0,$ which yields%
\[
\int_{\mathbb{T}}z^{n}\varphi \bar{g}dm=0
\]
for all $n\in \mathbb{Z}$. It follows from the definition of Fourier
coefficients of $\varphi g$ that $\left \vert \varphi \bar{g}\right \vert =0,$
which implies $|g|=|\varphi \bar{g}|=0.$ Hence $W=W_{1}=\varphi H^{2}$ .

3. It is clear that no nonzero subspace $W\supseteq zW$ of $H^{2}$ can have
the form $\chi_{E}L^{2}$ , so the only possibility is the situation in part
$(2),$ which means $W=\varphi H^{2}.$ Also, $\varphi \in \varphi H^{2}$
$=W\subset H^{2},$ which implies $\varphi \in H^{2}$ is inner.
\end{proof}

These results are also true when $\left \Vert {}\right \Vert _{2}$ is replaced
with $\left \Vert {}\right \Vert _{p}$ for $1\leq p\leq \infty,$ with the
additional assumption that $W$ is weak*-closed when $p=\infty$ (see
\cite{Sr1}, \cite{Sr2}, \cite{Sr3}). Many of the proofs for the $\left \Vert
{}\right \Vert _{p}$ case use the $L^{2}$ result and take cases when $p\leq2$
and $2<p$. In \cite{Chen} and \cite{Chen2}, the author proved version of
part $\left(  3\right)  $ for a large class
of norms, called \emph{rotationally invariant} norms. In these more general
setting, the cases $p\leq2$ and $2<p$ have no analogue.

In this paper we extend the Beurling-Helson-Lowdenslager theorem to an even
larger class of norms, the $\emph{continuous}$ $\Vert \Vert_{1}$%
\emph{-dominating normalized gauge norms}, with a proof that is simple even in
the $L^{p}$ case. For each such norm $\alpha$ we define a Banach space
$L^{\alpha}$ and a Hardy space $H^{\alpha}$ with
\[
L^{\infty}\subset L^{\alpha}\subset L^{1}\text{ and }H^{\infty}\subset
H^{\alpha}\subset H^{1}.
\]
In this new setting, we prove the following Beurling-Helson-Lowdenslager
theorem, which is the main result of this paper.\\

\textbf{Theorem 3.6.} Suppose $\alpha$ is a continuous $\Vert \Vert_{1}%
$-dominating normalized gauge norm and $W$ is a closed subspace of $L^{\alpha
}$. Then $zW\subseteq W$ if and only if either $W=\phi H^{\alpha}$ for some
unimodular function $\phi$ or $W=\chi_{E}L^{\alpha}$ for some Borel set
$E\subset \mathbb{T}$. If $0\neq W\subset H^{\alpha}$, then $W=\varphi
H^{\alpha}$ for some inner function $\varphi$.\newline

To prove Theorem 3.6. we need the following technical theorem in Section
3.\newline

\textbf{Theorem 3.4.} Suppose $\alpha$ is a continuous $\Vert \Vert_{1}%
$-dominating normalized gauge norm. Let $W$ be an $\alpha$-closed linear
subspace of $L^{\alpha}$, and $M$ be a weak*-closed linear subspace of
$L^{\infty}$ such that $zM\subseteq M$ and $zW\subseteq W$. Then

\begin{enumerate}
\item $M=[M]^{-\alpha}\cap L^{\infty};$

\item $W\cap M$ is weak*-closed in $L^{\infty};$

\item $W=[ W\cap L^{\infty}] ^{-\alpha}.$
\end{enumerate}

This gives us a quick route from the $\left \Vert {}\right \Vert _{2}$-version
of invariant subspace structure  to the (weak*-closed) $\left \Vert
{}\right \Vert _{\infty}$-version of invariant subspace structure, and then to
the $\alpha$-version of invariant subspace structure.

\section{Preliminaries}

\begin{definition}
A norm $\alpha$ on $L^{\infty}$ is called a $\Vert \Vert_{1}$\emph{-dominating
normalized gauge norm} if

\begin{enumerate}
\item $\alpha(1)=1,$

\item $\alpha(|f|)=\alpha(f)$ for every $f\in L^{\infty},$

\item $\alpha(f)\geq \|f\|_{1}$ for every $f\in L^{\infty}.$
\end{enumerate}
\end{definition}

We say that a $\Vert \Vert_{1}$-dominating normalized gauge norm is
$continuous$ if
\[
\lim_{m(E)\rightarrow0^{+}}\alpha(\chi_{E})=0.
\]

Although a $\Vert \Vert_{1}$-dominating normalized gauge norm $\alpha$ is
defined only on $L^{\infty},$ we can define $\alpha$ for all measurable
functions $f$ on $\mathbb{T}$ by
\[
\alpha(f)=\sup \{ \alpha(s):s\text{ is a simple function},0\leq s\leq|f|\}.
\]
It is clear that $\alpha \left(  f\right)  =\alpha \left(  \left \vert
f\right \vert \right)  $ still holds.

We define
\[
{\mathcal{L}}^{\alpha}=\{f:\alpha(f)<\infty \},
\]
and define $L^{\alpha}$ to be the $\alpha$-closure of $L^{\infty}$ in
$\mathcal{L}^{\alpha}$.

\begin{lemma}
\label{property of alpha}  Suppose $f,g:\mathbb{T}\rightarrow \mathbb{C}$ are
measurable. Let $\alpha$ be a $\Vert \Vert_{1}$-dominating normalized gauge
norm. Then the following statements are true:

\begin{enumerate}
\item $|f|\leq|g|\Longrightarrow \alpha(f)\leq \alpha(g);$

\item $\alpha(fg)\leq \alpha(f)\Vert g\Vert_{\infty};$

\item $\alpha(g)\leq \Vert g\Vert_{\infty};$

\item $L^{\infty}\subset L^{\alpha}\subset \mathcal{L}^{\alpha}\subset L^{1};$

\item If $\alpha$ is continuous,  $0\leq f_{1}\leq f_{2}\leq \cdots$ and
$f_{n}\rightarrow f$ a.e. $\left(  m\right)  $, then $\alpha \left(
f_{n}\right)  \rightarrow \alpha \left(  f\right)  ;$

\item If $\alpha$ is continuous, then $\mathcal{L}^{\alpha}$ and $L^{a}$ are
both Banach spaces.
\end{enumerate}
\end{lemma}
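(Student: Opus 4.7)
The plan is to establish (1) first, because (2)--(4) follow from it by elementary manipulations, and (5) will then underpin (6).

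The key step for (1) is monotonicity, which is not obvious from the stated axioms. First observe that the absolute-value axiom forces rotational invariance: for any unimodular $u\in L^{\infty}$, $\alpha(uf)=\alpha(|uf|)=\alpha(|f|)=\alpha(f)$. Now given $|f|\le|g|$ in $L^{\infty}$, I set $v=f/g$ on $\{g\ne 0\}$ and $v=0$ elsewhere, so $|v|\le 1$ pointwise and $f=vg$. The geometric fact I will exploit is that every point of the closed unit disk is the midpoint of a chord of the unit circle, so I can write $v=\tfrac12(u_{1}+u_{2})$ with $|u_{1}|=|u_{2}|=1$ measurably --- explicitly, $u_{j}=\exp\bigl(i(\theta\pm\arccos|v|)\bigr)$ for a Borel branch $\theta$ of $\arg v$ (choose $\theta=0$ where $v=0$). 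The triangle inequality together with rotational invariance then gives $\alpha(f)=\alpha\bigl(\tfrac12(u_{1}g+u_{2}g)\bigr)\le\tfrac12(\alpha(g)+\alpha(g))=\alpha(g)$. For unbounded $f,g$ the inequality is immediate from the definition of $\alpha$ as a supremum over simple functions $s$ with $0\le s\le|f|\le|g|$.

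Parts (2)--(4) fall out quickly. For (2): $|fg|\le\|g\|_{\infty}|f|$, so (1) and homogeneity give $\alpha(fg)\le\|g\|_{\infty}\alpha(f)$. Part (3) is (2) with $f\equiv 1$ since $\alpha(1)=1$. Part (4): $L^{\infty}\subseteq L^{\alpha}$ is definitional; $L^{\alpha}\subseteq\mathcal{L}^{\alpha}$ because $\alpha$-convergent sequences are $\alpha$-bounded; $\mathcal{L}^{\alpha}\subseteq L^{1}$ because every simple $s$ with $0\le s\le|f|$ satisfies $\|s\|_{1}\le\alpha(s)\le\alpha(f)$, and $\|f\|_{1}=\sup_{s}\|s\|_{1}$ by ordinary monotone convergence. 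For (5) I will handle $f\in L^{\infty}$ first: writing $g_{n}=f-f_{n}\downarrow 0$ with $\|g_{n}\|_{\infty}\le\|f\|_{\infty}$, Egorov's theorem furnishes, for each $\delta>0$, a set $E$ with $m(E)<\delta$ off which $g_{n}\to 0$ uniformly. Splitting $g_{n}=g_{n}\chi_{\mathbb{T}\setminus E}+g_{n}\chi_{E}$ and applying (2)--(3) yields $\alpha(g_{n})\le\|g_{n}\chi_{\mathbb{T}\setminus E}\|_{\infty}+\|f\|_{\infty}\alpha(\chi_{E})$, and the continuity axiom makes the second term small once $\delta$ is small. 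For unbounded $f$, reduce to the bounded case: for each simple $s$ with $0\le s\le f$, the functions $\min(f_{n},s)$ increase a.e.\ to $s\in L^{\infty}$, so the bounded case gives $\sup_{n}\alpha(f_{n})\ge\alpha(s)$, and taking the supremum over $s$ yields $\lim\alpha(f_{n})\ge\alpha(f)$.

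The main obstacle is (6). I plan to extract from (5) an $\alpha$-Fatou inequality: if $|h_{k}|\to|h|$ a.e., then $\alpha(h)\le\liminf_{k}\alpha(h_{k})$. This follows by applying (5) to $g_{K}:=\inf_{k\ge K}|h_{k}|\uparrow|h|$, combined with the monotonicity (1) which gives $\alpha(g_{K})\le\inf_{k\ge K}\alpha(h_{k})$. Completeness of $\mathcal{L}^{\alpha}$ then follows the usual route: given an $\alpha$-Cauchy sequence $(f_{n})$, the bound $\alpha\ge\|\cdot\|_{1}$ produces an $L^{1}$-limit $f$ and an a.e.\ convergent subsequence $f_{n_{k}}\to f$; for $n$ large and $\epsilon>0$ chosen so that $\alpha(f_{n}-f_{m})<\epsilon$ for $n,m$ past the Cauchy threshold, the $\alpha$-Fatou bound gives $\alpha(f_{n}-f)\le\liminf_{k}\alpha(f_{n}-f_{n_{k}})\le\epsilon$. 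That $f\in\mathcal{L}^{\alpha}$ is clear from $\alpha(f)\le\alpha(f_{n})+\alpha(f-f_{n})<\infty$. Finally $L^{\alpha}$, being by definition an $\alpha$-closed subspace of $\mathcal{L}^{\alpha}$, inherits the Banach space structure.
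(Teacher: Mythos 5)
Your proposal is correct. Parts (1)--(4) follow the paper's argument essentially verbatim: the same chord decomposition $v=\tfrac12(u_1+u_2)$ of a measurable function of modulus at most one into unimodular functions is exactly the paper's key step for monotonicity, and (2)--(4) are the same elementary consequences. The genuine divergence is in (5) and (6). For (5) the paper works directly with an arbitrary simple function $s=\sum_k a_k\chi_{E_k}$ below $f$ and a parameter $t<1$: it truncates each $E_k$ to the set where $f_n>ta_k$, observes these sets exhaust $E_k$, and uses continuity of $\alpha$ to kill the remainder. You instead prove the stronger statement $\alpha(f-f_n)\to0$ for bounded $f$ via Egorov, then reduce the unbounded case to the bounded one by truncating against simple functions $\min(f_n,s)\uparrow s$. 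Both hinge on the continuity axiom in the same way; yours yields the extra fact that monotone a.e.\ convergence of uniformly bounded functions is actually $\alpha$-norm convergence, while the paper's avoids Egorov entirely. For (6) the paper verifies completeness through the absolutely-convergent-series criterion, applying (5) to the partial sums of $\sum_n|f_n|$; you instead distill an $\alpha$-Fatou inequality from (5) and (1) and run the classical Cauchy-plus-a.e.-subsequence argument. The two are interchangeable: your Fatou inequality is a reusable intermediate, the paper's series argument is marginally shorter. Both correctly conclude that $L^{\alpha}$ is Banach as an $\alpha$-closed subspace of $\mathcal{L}^{\alpha}$.
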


\begin{proof}
1. It is clear that if $\left \vert u\right \vert =1$ a.e. $\left(  m\right)  $,
then
\[
\alpha \left(  uf\right)  =\alpha \left(  \left \vert uf\right \vert \right)
=\alpha \left(  \left \vert f\right \vert \right)  =\alpha \left(  f\right)  .
\]
If $\left \vert f\right \vert \leq \left \vert g\right \vert ,$ then there is a
measurable $h$ with $\left \vert h\right \vert \leq1$ a.e. $\left(  m\right)  $
such that $f=hg$. Then there are measurable functions $u_{1}$ and $u_{2}$ with
$\left \vert u_{1}\right \vert =\left \vert u_{2}\right \vert =1$ a.e. $\left(
m\right)  $ with $h=\left(  u_{1}+u_{2}\right)  /2$. Hence
\[
\alpha \left(  f\right)  =\alpha \left(  \left(  u_{1}g+u_{2}g\right)
/2\right)  \leq \frac{1}{2}\left[  \alpha \left(  u_{1}g\right)  +\alpha \left(
u_{2}g\right)  \right]  =\alpha \left(  g\right)  .
\]

2. This follows from part (1) and the fact that $\left \vert fg\right \vert
\leq \left \vert f\right \vert \left \Vert g\right \Vert _{\infty}$ a.e. $\left(
m\right)  $.

3. This follows from part (2) with $f=1$.

4. Since $\left \Vert s\right \Vert _{1}\leq \alpha \left(  s\right)  $ whenever
$s\in L^{\infty},$ it follows for any measurable $f$ that
\[
\alpha \left(  f\right)  \geq \sup \{ \left \Vert s\right \Vert _{1}:s\text{ is a
simple function},0\leq s\leq|f|\}=\left \Vert f\right \Vert _{1}.
\]
This implies $\mathcal{L}^{\alpha}\subset L^{1}$. The inclusions $L^{\infty
}\subset L^{\alpha}\subset \mathcal{L}^{\alpha}$ follow from the definition of
$L^{\alpha}$.

5. Suppose $0\leq s\leq f$ and $0\leq t<1.$ Write $s=\sum_{1\leq k\leq m}%
a_{k}\chi_{E_{k}}$ with $0<a_{k}$ for $1\leq k\leq m$ and $\left \{
E_{1},\ldots,E_{m}\right \}  $ disjoint. If we let $E_{k,n}=\left \{  \omega \in
E_{k}:ta_{k}<f_{n}\left(  \omega \right)  \right \}  ,$ we see that%
\[
E_{k,1}\subset E_{k,2}\subset \cdots \ \ \text{ and}\ \ \cup_{1\leq n<\infty}%
E_{k,n}=E_{k}.
\]
Since $\alpha$ is continuous,
\[
\alpha \left(  \chi_{E_{k}}-\chi_{E_{k,n}}\right)  =\alpha \left(  \chi
_{E_{k}\backslash E_{k,n}}\right)  \rightarrow0.
\]
Hence%
\[
t\alpha \left(  s\right)  =\lim_{n\rightarrow \infty}\alpha \left(  \sum
_{k=1}^{m}ta_{k}\chi_{E_{k,n}}\right)  \leq \lim_{n\rightarrow \infty}%
\alpha \left(  f_{n}\right)  .
\]
Since $t$ was arbitrary, for every simple function $s$ with $0\leq s\leq f,$
we have%
\[
\alpha \left(  s\right)  \leq \lim_{n\rightarrow \infty}\alpha \left(
f_{n}\right)  .
\]
By the definition of $\alpha \left(  f\right)  $, we see that $\alpha \left(
f\right)  \leq \lim_{n\rightarrow \infty}\alpha \left(  f_{n}\right)  $, and
$\alpha \left(  f_{n}\right)  \leq \alpha \left(  f\right)  $ for each $n\geq1$
follows from part $\left(  1\right)  ,$ which implies $\lim_{n\rightarrow
\infty}\alpha \left(  f_{n}\right) \leq \alpha(f).$ This completes the proof.

6. It follows from the definition of $\mathcal{L}^{\alpha}$ that
$\mathcal{L}^{\alpha}$ is a normed space with respect to $\alpha.$ To prove
the completeness, suppose $\left \{  f_{n}\right \}  $ is a sequence in
$\mathcal{L}^{\alpha}$ with $\sum_{n=1}^{\infty}\alpha \left(  f_{n}\right)
<\infty.$ Since $\left \Vert {}\right \Vert _{1}\leq \alpha \left(  {}\right)  , $
we know that $g=\sum_{n=1}^{\infty}\left \vert f_{n}\right \vert \in L^{1}$ and
$f=\sum_{n=1}^{\infty}f_{n}$ converges a.e. $\left(  m\right)  $. It follows
from part (5) that
\[
\alpha \left(  g\right)  =\lim_{N\rightarrow \infty}\alpha \left(  \sum_{n=1}%
^{N}\left \vert f_{n}\right \vert \right)  \leq \lim_{N\rightarrow \infty}%
\sum_{n=1}^{N}\alpha \left(  | f_{n}|\right)  =\sum_{n=1}^{\infty}\alpha \left(
f_{n}\right)  <\infty.
\]
Since $\left \vert f\right \vert \leq g,$ from part $(1)$ we know $\alpha \left(
f\right)  <\infty$. Thus $f\in \mathcal{L}^{\alpha}$. Also, by part (1) and
part (5), we have for each $N\geq1,$%
\[
\alpha \left(  f-\sum_{n=1}^{N}f_{n}\right)  \leq \alpha \left(  \sum
_{n=N+1}^{\infty}\left \vert f_{n}\right \vert \right)  \leq \sum_{n=N+1}%
^{\infty}\alpha \left(  f_{n}\right)  =\sum_{n=1}^{\infty}\alpha \left(
f_{n}\right)  -\sum_{n=1}^{N}\alpha \left(  f_{n}\right)  .
\]
Hence
\[
\lim_{N\rightarrow \infty}\alpha \left(  f-\sum_{n=1}^{N}f_{n}\right)  =0.
\]
Since every absolutely convergent series in $\mathcal{L}^{\alpha}$ is
convergent in $\mathcal{L}^{\alpha}$, we know $\mathcal{L}^{\alpha}$ is complete.
\end{proof}

We let $\mathcal{N}$ denote the set of all $\Vert \Vert_{1}$-dominating
normalized gauge norms, and we let $\mathcal{N}_{c}$ denote the set of
continuous ones. We can give $\mathcal{N}$ the topology of pointwise convergence.

\begin{lemma}
The sets $\mathcal{N}$ and $\mathcal{N}_{c}$ are convex, and the set
$\mathcal{N}$ is compact in the topology of pointwise convergence.
\end{lemma}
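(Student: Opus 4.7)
The plan is to handle convexity by direct componentwise verification and compactness by combining Tychonoff's theorem with a routine closedness check inside a product of intervals. Both arguments are essentially mechanical; the only non-obvious ingredient is noticing that the right compact product to use is supplied by the a priori bound $\alpha(f)\le\|f\|_\infty$ coming from Lemma \ref{property of alpha}(3).

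For convexity, I would fix $\alpha,\beta\in\mathcal{N}$ and $t\in[0,1]$, set $\gamma=t\alpha+(1-t)\beta$, and note that a convex combination of norms on $L^\infty$ is again a norm. Each of the three defining properties is preserved affinely: $\gamma(1)=t\alpha(1)+(1-t)\beta(1)=1$; $\gamma(|f|)=t\alpha(|f|)+(1-t)\beta(|f|)=\gamma(f)$; and $\gamma(f)\ge t\|f\|_1+(1-t)\|f\|_1=\|f\|_1$. If in addition $\alpha,\beta\in\mathcal{N}_c$, then $\gamma(\chi_E)=t\alpha(\chi_E)+(1-t)\beta(\chi_E)\to 0$ as $m(E)\to 0^+$, so $\mathcal{N}_c$ is convex as well.

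For compactness, by parts $(3)$ and $(4)$ of Lemma \ref{property of alpha} every $\alpha\in\mathcal{N}$ satisfies $\|f\|_1\le\alpha(f)\le\|f\|_\infty$ for each $f\in L^\infty$. Hence $\mathcal{N}$ embeds into the product space $X=\prod_{f\in L^\infty}[0,\|f\|_\infty]$, and by definition the topology of pointwise convergence on $\mathcal{N}$ coincides with the subspace topology inherited from the product topology on $X$. Tychonoff's theorem makes $X$ compact, so it remains only to show that $\mathcal{N}$ is closed in $X$.

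To verify closedness, I would take a net $(\alpha_\lambda)\subset\mathcal{N}$ converging pointwise to some $\alpha:L^\infty\to[0,\infty)$ and check that every requirement is preserved under pointwise limits. Homogeneity and the triangle inequality pass through a pointwise limit of norms directly, while non-degeneracy follows from $\alpha(f)=\lim_\lambda\alpha_\lambda(f)\ge\|f\|_1$, forcing $\alpha(f)=0$ to imply $f=0$. The three defining conditions are equally immediate: $\alpha(1)=\lim_\lambda 1=1$, $\alpha(|f|)=\lim_\lambda\alpha_\lambda(|f|)=\lim_\lambda\alpha_\lambda(f)=\alpha(f)$, and $\alpha(f)\ge\|f\|_1$. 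Hence $\alpha\in\mathcal{N}$, and $\mathcal{N}$ is closed, hence compact. I expect this closedness verification to be the main, and still very mild, obstacle, since it is the only point at which one really needs the upper bound from Lemma \ref{property of alpha}(3) to realize $\mathcal{N}$ inside a compact box rather than an unbounded product.
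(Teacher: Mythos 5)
Your proof is correct and takes essentially the same route as the paper: the paper also rests everything on the a priori bounds $\|f\|_{1}\le\alpha(f)\le\|f\|_{\infty}$, extracting a pointwise-convergent universal subnet because each coordinate lies in the compact interval $[\|f\|_{1},\|f\|_{\infty}]$, which is just Tychonoff's theorem carried out by hand. Your version is slightly more explicit in checking that the defining conditions survive the pointwise limit, a step the paper leaves to the reader.
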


\begin{proof}
It directly follows from the definitions that $\mathcal{N}$ and $\mathcal{N}%
_{c}$ are convex. Suppose $\left \{  \alpha_{\lambda}\right \}  $ is a net in
$\mathcal{N}$ and choose a subnet $\left \{  \alpha_{\lambda_{\kappa}}\right \}
$ that is an ultranet. Then, for every $f\in L^{\infty}$, we have that
$\left \{  \alpha_{\lambda_{\kappa}}\left(  f\right)  \right \}  $ is an
ultranet in the compact set $\left[  \left \Vert f\right \Vert _{1},\left \Vert
f\right \Vert _{\infty}\right]  .$ Thus
\[
\alpha \left(  f\right)  =\lim_{k}\alpha_{\lambda_{k}}\left(  f\right)
\]
exists. It follows directly from the definition of $\mathcal{N}$ that
$\alpha \in \mathcal{N}$. Hence $\mathcal{N}$ is compact.
\end{proof}

\begin{example} There are many interesting examples of continuous $\|\|_1$-dominating normalized gauge norms other than the usual $\|\|_p$ norms ( $1\leq p< \infty$).
\begin{enumerate}\item If $1\leq p_{n}<\infty$ for $n\geq1,$ then $\alpha=\sum_{n=1}^{\infty}\frac
{1}{2^{n}}\left \Vert {}\right \Vert _{p_{n}}\in \mathcal{N}_{c}$. Moreover if
$p_{n}\rightarrow \infty$, then $\alpha$ is not equivalent to any $\left \Vert
{}\right \Vert _{p}$ .
\item The Lorentz, Marcinkiewicz and Orlicz norms are important examples (e.g., see \cite{AKS}).

\end{enumerate}
\end{example}

\begin{definition}
\label{definition of dual norm} Let $\alpha$ be a $\Vert \Vert_{1}$-dominating
normalized gauge norm. We define the \emph{dual norm} $\alpha^{\prime}:
L^{\infty}\rightarrow[0,\infty]$ by%
\begin{align*}
\alpha^{\prime}\left(  f\right)   &  =\sup \left \{  \left \vert \int
_{\mathbb{T}}fhdm\right \vert :h\in L^{\infty},\alpha \left(  h\right)
\leq1\right \} \\
&  =\sup \left \{  \int_{\mathbb{T}}\left \vert fh\right \vert dm:h\in L^{\infty
},\alpha \left(  h\right)  \leq1\right \}  .
\end{align*}

\end{definition}

\begin{lemma}
\label{property of alpha prime} Let $\alpha$ be a $\Vert \Vert_{1}$-dominating
normalized gauge norm. Then the dual norm $\alpha^{\prime}$ is also a
$\Vert \Vert_{1}$-dominating normalized gauge norm.
\end{lemma}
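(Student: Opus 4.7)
The plan is to verify the three defining properties of a $\|\|_1$-dominating normalized gauge norm directly from the definition of $\alpha'$, after first confirming that $\alpha'$ actually takes finite values on $L^\infty$ and defines a norm.

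First I would check finiteness: for any $h \in L^\infty$ with $\alpha(h) \le 1$, the estimate $\int_{\mathbb{T}} |fh|\, dm \le \|f\|_\infty \|h\|_1 \le \|f\|_\infty \alpha(h) \le \|f\|_\infty$ shows $\alpha'(f) \le \|f\|_\infty < \infty$ for every $f \in L^\infty$. Subadditivity, absolute homogeneity, and nonnegativity of $\alpha'$ are immediate because $\alpha'$ is expressed as a supremum of absolute values of linear functionals $h \mapsto \int fh\, dm$. For definiteness, I would defer to the axiom $\alpha'(f) \ge \|f\|_1$ established below, which forces $\alpha'(f)=0 \Rightarrow f = 0$ a.e.

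Next I would verify axiom (3), $\alpha'(f) \ge \|f\|_1$, by testing against $h = 1$: since $\alpha(1) = 1$, the choice $h = 1$ is admissible, and the second expression in the definition gives $\alpha'(f) \ge \int_{\mathbb{T}} |f|\, dm = \|f\|_1$. This also immediately yields axiom (1) in one direction: $\alpha'(1) \ge \|1\|_1 = 1$. For the reverse inequality, for any admissible $h$ we have $\left|\int h\, dm\right| \le \|h\|_1 \le \alpha(h) \le 1$, so $\alpha'(1) \le 1$, giving $\alpha'(1) = 1$.

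For axiom (2), $\alpha'(|f|) = \alpha'(f)$, I would first justify the equality of the two expressions in Definition~\ref{definition of dual norm} using the standard sign trick: given $h$ with $\alpha(h) \le 1$, let $v$ be a measurable unimodular function satisfying $|fh| = v \cdot fh$; then $\alpha(vh) = \alpha(|vh|) = \alpha(|h|) = \alpha(h) \le 1$ by property (2) of $\alpha$, and $\int |fh|\, dm = \int f(vh)\, dm$, so the two suprema coincide. The same substitution, now using $u = \overline{\mathrm{sgn}(f)}$ so that $|f| = uf$, gives $\int |f|\, h\, dm = \int f(uh)\, dm$ with $\alpha(uh) = \alpha(h)$; running this replacement in both directions shows $\alpha'(|f|) = \alpha'(f)$.

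There is no real obstacle here: the argument is essentially a translation of the definition. The only subtlety worth flagging is the repeated use of property (2) of $\alpha$ (invariance under multiplication by unimodular functions), which is what makes $\alpha'$ inherit the gauge property; without that invariance, axiom (2) for $\alpha'$ and the equality of the two supremum formulas would both fail.
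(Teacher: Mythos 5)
Your proof is correct and follows essentially the same route as the paper: the upper bound $\alpha'(f)\le\|f\|_\infty$ via $\|h\|_1\le\alpha(h)$, the lower bound $\alpha'(f)\ge\|f\|_1$ by testing with $h=1$, and $\alpha'(1)=1$ by combining the two. The one place you go further is in actually verifying the gauge property $\alpha'(|f|)=\alpha'(f)$ and the equality of the two supremum formulas via unimodular multipliers, which the paper asserts without proof; your argument there is valid and fills in a detail the paper leaves implicit.
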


\begin{proof}
Suppose $f\in L^{\infty}.$ If $h\in L^{\infty}$ with $\alpha(h)\leq1,$ then
\[
\int_{\mathbb{T}}|fh|dm\leq \Vert f\Vert_{\infty}\Vert h\Vert_{1}\leq \Vert
f\Vert_{\infty}\alpha(h)\leq \Vert f\Vert_{\infty},
\]
thus $\alpha^{\prime}(f)\leq \Vert f\Vert_{\infty}.$ On the other hand, since
$\alpha \left(  1\right)  =1,$ we have
\[
\alpha^{\prime}\left(  f\right)  \geq \int_{\mathbb{T}}|f|1dm=\Vert f\Vert
_{1},
\]
it follows that $\alpha^{\prime}$ is a norm with $\alpha^{\prime}%
(|f|)=\alpha^{\prime}(f)$ for every $f\in L^{\infty}.$ Since $\left \Vert
1\right \Vert _{1}\leq \alpha^{\prime}\left(  1\right)  \leq \left \Vert
1\right \Vert _{\infty}$, we see that $\alpha^{\prime}(1)=1.$
\end{proof}

Now we are ready to describe the dual space of $L^{\alpha},$ when $\alpha$ is
a continuous $\Vert \Vert_{1}$-dominating normalized gauge norm.

\begin{proposition}
\label{dualspace} Let $\alpha$ be a continuous $\Vert \Vert_{1}$-dominating
normalized gauge norm and let $\alpha^{\prime}$ be the dual norm of $\alpha$
as in Definition \ref{definition of dual norm}. Then $\left(  L^{\alpha
}\right)  ^{\sharp}={\mathcal{L}}^{\alpha^{\prime}}$, i.e., for every $\phi
\in \left(  L^{\alpha}\right)  ^{\sharp}$, there is an $h\in \mathcal{L}%
^{\alpha^{\prime}}$ such that $\left \Vert \phi \right \Vert =\alpha^{\prime
}\left(  h\right)  $ and
\[
\phi(f)=\int_{\mathbb{T}}fhdm\
\]
for all $f\in L^{\alpha}.$
\end{proposition}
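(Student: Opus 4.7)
The plan is to reduce the hard direction to the classical Radon--Nikodym theorem. The easy direction is to check that each $h \in \mathcal{L}^{\alpha'}$ yields a bounded linear functional $\phi_h(f) = \int f h\,dm$ on $L^{\alpha}$ with operator norm equal to $\alpha'(h)$. This rests on the H\"older-type inequality $\int |f h|\,dm \leq \alpha'(h)\,\alpha(f)$, which is immediate from the definition of $\alpha'$ when $f \in L^{\infty}$, and extends to $f \in L^{\alpha}$ by truncating $|f|$ at height $n$ and invoking the monotone convergence statement in Lemma \ref{property of alpha} (5). The nontrivial task is to show every $\phi \in (L^{\alpha})^{\sharp}$ has this form.

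Given $\phi \in (L^{\alpha})^{\sharp}$, I would set $\nu(E) = \phi(\chi_{E})$ for each Borel $E \subseteq \mathbb{T}$. Finite additivity is immediate from linearity of $\phi$. For countable additivity, if $E = \bigsqcup_{n=1}^{\infty} E_{n}$, then $\chi_{E} - \sum_{n=1}^{N} \chi_{E_{n}} = \chi_{E \setminus \bigcup_{n \leq N} E_{n}}$ and $m(E \setminus \bigcup_{n \leq N} E_{n}) \to 0$, so continuity of $\alpha$ forces $\alpha$-convergence of the partial sums to $\chi_{E}$; applying the bounded functional $\phi$ gives $\nu(E) = \sum_{n} \nu(E_{n})$. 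The same continuity shows $\alpha(\chi_{E}) = 0$ whenever $m(E) = 0$, hence $|\nu(E)| \leq \|\phi\|\,\alpha(\chi_{E}) = 0$, so $\nu \ll m$. The classical Radon--Nikodym theorem then produces $h \in L^{1}$ with $\nu(E) = \int_{E} h\,dm$, and by linearity $\phi(s) = \int s h\,dm$ for every simple function $s$.

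To upgrade the identity $\phi(f) = \int f h\,dm$ from simple functions to all of $L^{\alpha}$, I would argue in stages. For $g \in L^{\infty}$, uniform approximation by simple functions combined with $h \in L^{1}$ and $\alpha \leq \|\cdot\|_{\infty}$ on $L^{\infty}$ lets me pass to the limit on both sides. To conclude $h \in \mathcal{L}^{\alpha'}$ with $\alpha'(h) \leq \|\phi\|$, I would use a unimodular twist: given $g \in L^{\infty}$ with $\alpha(g) \leq 1$, factor $g h = |g h|\,w$ with $|w| = 1$ a.e., set $g' = \bar{w} g \in L^{\infty}$ so that $\alpha(g') = \alpha(g) \leq 1$, and observe that $\int |g h|\,dm = \int g' h\,dm = \phi(g') \leq \|\phi\|$; taking the supremum over $g$ gives $\alpha'(h) \leq \|\phi\|$, so $h \in \mathcal{L}^{\alpha'}$. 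Finally, for arbitrary $f \in L^{\alpha}$, choose $f_{n} \in L^{\infty}$ with $\alpha(f_{n} - f) \to 0$; then $\phi(f_{n}) \to \phi(f)$, while the H\"older estimate yields $\bigl|\int (f_{n} - f) h\,dm\bigr| \leq \alpha'(h)\,\alpha(f_{n} - f) \to 0$, so $\phi(f) = \int f h\,dm$. The reverse bound $\|\phi\| \leq \alpha'(h)$ also falls out of the H\"older estimate, giving $\|\phi\| = \alpha'(h)$.

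The main obstacle is the verification that $\nu$ is countably additive: this is precisely where the continuity hypothesis on $\alpha$ is indispensable, and where one really uses that the domain of $\phi$ is $L^{\alpha}$ (the $\alpha$-closure of $L^{\infty}$) rather than the possibly larger $\mathcal{L}^{\alpha}$. Once $h \in L^{1}$ has been extracted, the identification of $h$ with a functional on $L^{\alpha}$ and the matching of norms is a routine density argument built on the H\"older inequality for the pair $(\alpha, \alpha')$.
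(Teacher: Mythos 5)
Your proposal is correct and follows essentially the same route as the paper: both use the continuity of $\alpha$ to establish countable additivity of $E\mapsto\phi(\chi_{E})$, deduce from this an $L^{1}$ representing density $h$ (the paper phrases this as weak*-continuity of $\phi|_{L^{\infty}}$, you invoke Radon--Nikodym directly, which is the same underlying fact), and then extend to $L^{\alpha}$ by density. Your write-up simply makes explicit several steps the paper compresses, notably the unimodular-twist argument showing $\alpha'(h)\leq\|\phi\|$ and the H\"older-type estimate giving the reverse inequality.
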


\begin{proof}
If $\left \{  E_{n}\right \}  $ is a countable collection of disjoint  Borel subsets of
$\mathbb{T}$, it follows that
\[
\lim_{N\rightarrow \infty}m\left(  \cup_{n=N+1}^{\infty}E_{n}\right)  =0,
\]
and the continuity of $\alpha$ implies
\[
\lim_{N\rightarrow \infty}\left \vert \phi \left(  \sum_{n=N+1}^{\infty}%
\chi_{E_{n}}\right)  \right \vert \leq \lim_{N\rightarrow \infty}\left \Vert
\phi \right \Vert \alpha \left(  \sum_{n=N+1}^{\infty}\chi_{E_{n}}\right)  =0.
\]
Hence%
\[
\phi \left(  \sum_{n=1}^{\infty}\chi_{E_{n}}\right)  =\sum_{n=1}^{\infty}%
\phi \left(  \chi_{E_{n}}\right)  .
\]
It follows that the restriction of $\phi$ to $L^{\infty}$ is weak*-continuous,
which implies there is an $h\in L^{1}$ such that, for every $f\in L^{\infty},$%
\[
\phi \left(  f\right)  =\int_{\mathbb{T}}fhdm.
\]
Since $L^{\infty}$ is
dense in $L^{\alpha},$ it follows that%
\[
\phi \left(  f\right)  =\int_{\mathbb{T}}fhdm
\]
holds for all $f\in L^{\alpha}$. Moreover, the definitions of $\alpha^{\prime}$ and $\left \Vert \phi \right \Vert $ imply
that $\alpha^{\prime}(h)=\left \Vert \phi \right \Vert .$
\end{proof}

\bigskip

\begin{corollary}
Let $\alpha$ be a continuous $\Vert \Vert_{1}$-dominating normalized gauge norm
and let $\alpha^{\prime}$ be the dual norm of $\alpha$ as in Definition
\ref{definition of dual norm}. Then $\mathcal{L}^{\alpha^{\prime}}$ is a
Banach space.
\end{corollary}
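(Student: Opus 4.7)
The plan is to promote the duality already established in Proposition \ref{dualspace} to an isometric identification $\mathcal{L}^{\alpha'} \cong (L^{\alpha})^{\sharp}$ and then exploit the fact that the dual of any normed space is automatically a Banach space in the operator norm. Note that one cannot simply apply Lemma \ref{property of alpha}(6) to $\alpha'$, since $\alpha'$ need not be continuous (for example the dual of $\|\cdot\|_{1}$ is $\|\cdot\|_{\infty}$, which satisfies $\alpha'(\chi_{E})=1$ whenever $m(E)>0$); so the argument must be routed through $L^{\alpha}$.

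The plan is to analyze the map $\Psi : \mathcal{L}^{\alpha'} \to (L^{\alpha})^{\sharp}$ defined by $\Psi(h)(f) = \int_{\mathbb{T}} fh\, dm$. Proposition \ref{dualspace} already yields surjectivity together with $\|\Psi(h)\| = \alpha'(h)$ on the range. What remains is to check that $\Psi$ is everywhere well-defined and bounded, i.e.\ to establish a H\"older-type inequality
\[
\int_{\mathbb{T}} |fh|\, dm \;\leq\; \alpha(f)\,\alpha'(h), \qquad f \in \mathcal{L}^{\alpha},\ h \in \mathcal{L}^{\alpha'}.
\]
For $f,h \in L^{\infty}$ this is immediate from the definition of $\alpha'$. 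For $f \in L^{\infty}$ and general $h \in \mathcal{L}^{\alpha'}$, I would approximate $|h|$ from below by nonnegative simple functions $t_{n} \nearrow |h|$, apply the $L^{\infty}$ case to each pair $(f,t_{n})$, and pass to the limit by the monotone convergence theorem on the left and the supremum definition $\alpha'(h) = \sup_{n} \alpha'(t_{n})$ on the right. Finally, for $f \in \mathcal{L}^{\alpha}$, approximate $|f|$ from below by simple $s_{n} \nearrow |f|$; by Lemma \ref{property of alpha}(5), $\alpha(s_{n}) \to \alpha(f)$ (this is exactly where continuity of $\alpha$ enters), and another MCT pass on $\int s_{n}|h|\, dm$ produces the inequality.

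With this inequality in hand, $\Psi(h)$ is a well-defined element of $(L^{\alpha})^{\sharp}$ with $\|\Psi(h)\| \leq \alpha'(h)$, and $\Psi$ is injective because $\int f h\, dm = 0$ for all $f \in L^{\infty}$ forces $h = 0$ a.e. Combined with the surjectivity and the matching norms supplied by Proposition \ref{dualspace}, $\Psi$ is a surjective linear isometry onto the Banach space $(L^{\alpha})^{\sharp}$, so $\mathcal{L}^{\alpha'}$ is complete. I expect the main obstacle to be the extended H\"older inequality in the $h$-variable: since $\alpha'$ may fail to be continuous we cannot invoke Lemma \ref{property of alpha}(5) for $\alpha'$, and must instead lean on the sup-over-simple-functions \emph{definition} of $\alpha'(h)$ together with two careful monotone convergence arguments performed in the correct order.
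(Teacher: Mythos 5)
Your proof is correct and takes essentially the same route as the paper: identify $\mathcal{L}^{\alpha'}$ isometrically with the dual space $(L^{\alpha})^{\sharp}$ via Proposition \ref{dualspace} and invoke the fact that dual spaces are complete. The paper does this in one line, taking the identification for granted, whereas you additionally verify the H\"older-type inequality showing that every $h\in\mathcal{L}^{\alpha'}$ genuinely induces a bounded functional with $\|\Psi(h)\|=\alpha'(h)$ --- a detail the paper's Proposition \ref{dualspace} (which only addresses surjectivity) leaves implicit.
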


\begin{proof}
It follows from Theorem \ref{dualspace} that $\mathcal{L}^{\alpha^{\prime}}$
is the dual space of $L^{\alpha},$ thus $\mathcal{L}^{\alpha^{\prime}}$ is a
Banach space.
\end{proof}

We let $\mathbb{B}=\left \{  f\in L^{\infty}:\left \Vert f\right \Vert _{\infty
}\leq1\right \}  $ denote the closed unit ball in $L^{\infty}$.

\begin{lemma}
\label{property of unit ball} \label{=top}Let $\alpha$ be a continuous
$\Vert \Vert_{1}$-dominating normalized gauge norm. Then

\begin{enumerate}
\item The $\alpha$-topology and the $\left \Vert {}\right \Vert _{2}$-topology
coincide on $\mathbb{B}$.

\item $\mathbb{B}=\left \{  f\in L^{\infty}:\left \Vert f\right \Vert _{\infty
}\leq1\right \}  $ is $\alpha$-closed.
\end{enumerate}
\end{lemma}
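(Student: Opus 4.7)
The plan is to exploit the sandwich $\|\cdot\|_1 \leq \alpha \leq \|\cdot\|_\infty$ on $L^\infty$ together with the continuity hypothesis on $\alpha$. Part (2) is the easier half: if $\{f_n\} \subset \mathbb{B}$ and $\alpha(f_n - f) \to 0$, then $\|f_n - f\|_1 \to 0$, so a subsequence converges to $f$ pointwise a.e.~$(m)$. Since $|f_{n_k}| \leq 1$ a.e., the pointwise limit satisfies $|f| \leq 1$ a.e., hence $f \in \mathbb{B}$.

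For part (1), I would check the two inclusions of topologies separately on $\mathbb{B}$. In one direction, $\alpha(f_n - f) \to 0$ forces $\|f_n - f\|_1 \to 0$ as above, and the elementary interpolation $\|g\|_2^2 \leq \|g\|_\infty \|g\|_1$ applied to $g = f_n - f \in 2\mathbb{B}$ upgrades this to $\|f_n - f\|_2 \to 0$. For the reverse direction, suppose $f_n, f \in \mathbb{B}$ and $\|f_n - f\|_2 \to 0$. Then $f_n \to f$ in measure, so for each $\varepsilon > 0$ the sets $E_n = \{|f_n - f| > \varepsilon\}$ satisfy $m(E_n) \to 0$. Splitting $f_n - f = (f_n - f)\chi_{E_n} + (f_n - f)\chi_{E_n^c}$ and using parts (1) and (3) of Lemma~\ref{property of alpha} together with the continuity of $\alpha$ gives
\[
\alpha(f_n - f) \leq 2\alpha(\chi_{E_n}) + \alpha(\varepsilon \chi_{E_n^c}) \leq 2\alpha(\chi_{E_n}) + \varepsilon \longrightarrow \varepsilon,
\]
so $\limsup_n \alpha(f_n - f) \leq \varepsilon$; since $\varepsilon$ was arbitrary, $\alpha(f_n - f) \to 0$.

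The only real leverage beyond routine estimates is the continuity hypothesis $\lim_{m(E)\to 0^+}\alpha(\chi_E) = 0$, which is precisely what tames the ``bad set'' $E_n$ in the reverse inclusion of part (1). Without it, a concentration of mass on a small set could keep $\alpha(f_n - f)$ bounded below even as $\|f_n - f\|_2$ tends to zero, so the hypothesis is not merely convenient but essential for this direction.
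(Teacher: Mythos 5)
Your proof is correct and follows essentially the same route as the paper: part (2) via an a.e.-convergent subsequence of a $\Vert\cdot\Vert_1$-convergent sequence, and the key direction of part (1) via the splitting over $E_n=\{|f_n-f|>\varepsilon\}$ with the continuity hypothesis taming $\alpha(\chi_{E_n})$. The only cosmetic difference is in the easy direction of part (1), where you use the interpolation $\Vert g\Vert_2^2\leq\Vert g\Vert_\infty\Vert g\Vert_1$ on the bounded set, while the paper instead notes that its argument identifies the $\alpha$-topology with convergence in measure on $\mathbb{B}$ for every continuous $\Vert\cdot\Vert_1$-dominating gauge norm, including $\Vert\cdot\Vert_2$ itself.
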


\begin{proof}
1. Since $\alpha$ is $\Vert \Vert_{1}$-dominating, $\alpha$-convergence implies
$\left \Vert {}\right \Vert _{1}$-convergence, which implies convergence in
measure. Suppose $\left \{  f_{n}\right \}  $ is a sequence in $\mathbb{B}$ and
$f\in \mathbb{B}$ with $f_{n}\rightarrow f$ in measure and $\varepsilon>0.$ If
$E_{n}=\left \{  z\in \mathbb{T}:\left \vert f\left(  z\right)  -f_{n}\left(
z\right)  \right \vert \geq \frac{\varepsilon}{2}\right \}  ,$ then $\lim
_{n\rightarrow \infty}m\left(  E_{n}\right)  =0.$ Since $\alpha$ is continuous,
we have $\lim_{n\rightarrow \infty}\alpha \left(  \chi_{E_{n}}\right)  =0,$
which implies that%

\begin{align}
\alpha \left(  f_{n}-f\right)  & =\alpha((f-f_{n})\chi_{E_{n}}+(f-f_{n}%
)\chi_{\mathbb{T}\backslash E_{n}})\nonumber \\
& \leq \alpha((f-f_{n})\chi_{E_{n}})+\alpha((f-f_{n})\chi_{\mathbb{T}\backslash
E_{n}})\nonumber \\
& < \alpha(|f-f_{n}|\chi_{E_{n}})+\frac{\varepsilon}{2}\nonumber \\
& \leq \|f-f_{n}\|_{\infty}\alpha(\chi_{E_{n}})+\frac{\varepsilon}{2}\tag{by Part
(2) of Lemma \ref{property of alpha}}\\
& \leq2\alpha(\chi_{E_{n}})+\frac{\varepsilon}{2}.\tag{$f, f_n\in \mathbb B$ for all $n\geq 1$}
\end{align}
Hence $\alpha \left(  f_{n}-f\right)  \rightarrow0$ as $n\rightarrow \infty.$
Therefore $\alpha$-convergence is equivalent to convergence in measure on
$\mathbb{B}$. Since $\alpha$ was arbitrary, the same holds for $\left \Vert
{}\right \Vert _{2}$-convergence.

2. Suppose $\{g_{n}\}$ is a sequence in $\mathbb{B}$ and $g\in L^{\infty}$
such that $\alpha(g_{n}-g)\rightarrow0.$ Since $\| \|_{1}\leq \alpha(),$ it
follows that $\|g_{n}-g\|_{1}\rightarrow0,$ which implies that $g_{n}%
\rightarrow g$ in measure. Then there is a subsequence $\{g_{n_{k}}\}$ such
that $g_{n_{k}}\rightarrow g$ a.e. $(m).$ Since $g_{n_{k}}\in \mathbb{B}$ for
each $k\in \mathbb{N},$ we can conclude that $|g|\leq1,$ and hence
$g\in \mathbb{B}.$ This completes the proof.
\end{proof}

\section{The Main Result}

In this section we prove our generalization of the classical
Beurling-Helson-Lowdenslager theorem. Suppose $\alpha$ is a continuous
$\Vert \Vert_{1}$-dominating normalized gauge norm. We define $H^{\alpha}$ to
be the $\alpha$-closure of $H^{\infty}$, i.e.,%

\[
H^{\alpha}=\left[  H^{\infty}\right]  ^{-\alpha}.
\]
Since the polynomials in the unit ball $\mathbb{B}$ of $H^{\infty}$ are
$\left \Vert {}\right \Vert _{2}$-dense in $\mathbb{B}$ (consider the Cesaro
means of the sequence of partial sums of the power series), we know from Lemma
\ref{=top} that $H^{\alpha}$ is the $\alpha$-closure of the set of
polynomials.

The following lemma extends the classical characterization of $H^{p}=H^{1}\cap
L^{p}$ to a more general setting.

\begin{lemma}
\label{intersection}Let $\alpha$ be a continuous $\Vert \Vert_{1}$-dominating
normalized gauge norm. Then
\[
H^{\alpha} =H^{1}\cap L^{\alpha}.
\]

\end{lemma}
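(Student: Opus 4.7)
The plan is to prove the two inclusions separately. The forward inclusion $H^{\alpha} \subseteq H^{1} \cap L^{\alpha}$ is immediate: since the definition of $\alpha$ gives $\alpha \geq \Vert \cdot \Vert_{1}$, every $\alpha$-convergent sequence is $L^{1}$-convergent, so the $\alpha$-closure of $H^{\infty}$ lies inside the $L^{1}$-closure of $H^{\infty}$, which is $H^{1}$. The inclusion $H^{\alpha} \subseteq L^{\alpha}$ follows from the definitions since $H^{\infty} \subseteq L^{\infty} \subseteq L^{\alpha}$ and $L^{\alpha}$ is $\alpha$-closed.

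For the reverse inclusion $H^{1} \cap L^{\alpha} \subseteq H^{\alpha}$, I would use an outer-function truncation. Fix a nonzero $f \in H^{1} \cap L^{\alpha}$; by standard Hardy-space theory $f \neq 0$ a.e.\ on $\mathbb{T}$ and $\log|f| \in L^{1}$. Let $u_{n} = \min(1, n/|f|)$ and let $\phi_{n}$ be the outer function in $H^{\infty}$ with $|\phi_{n}| = u_{n}$ on $\mathbb{T}$, so that $\Vert \phi_{n}\Vert_{\infty} \leq 1$. Then $|f\phi_{n}| \leq n$ together with $f\phi_{n} \in H^{1}$ gives $f\phi_{n} \in H^{1} \cap L^{\infty} = H^{\infty}$. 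On $\mathbb{T}$ we have $\phi_{n} = u_{n}\, e^{i v_{n}}$, where $v_{n}$ is the harmonic conjugate of $\log u_{n}$. The bound $|\log u_{n}| \leq \log|f| \cdot \chi_{\{|f|>n\}}$ and absolute continuity of the Lebesgue integral force $\log u_{n} \to 0$ in $L^{1}$; the weak-type $(1,1)$ estimate for the Hilbert transform then gives $v_{n} \to 0$ in measure. Passing to a subsequence, $\phi_{n_{k}} \to 1$ a.e., hence $f\phi_{n_{k}} \to f$ a.e.\ with $|f\phi_{n_{k}} - f| \leq 2|f|$.

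To conclude $\alpha(f\phi_{n_{k}} - f) \to 0$, I invoke a dominated-convergence principle for $\alpha$: if $g_{n} \to g$ a.e.\ and $|g_{n}| \leq G \in L^{\alpha}$, then $\alpha(g_{n} - g) \to 0$. Via Egorov's theorem this reduces to showing the absolute continuity $\alpha(G\chi_{E}) \to 0$ as $m(E) \to 0^{+}$ for each $G \in L^{\alpha}$, which is proved by decomposing $G = G_{1} + G_{2}$ with $G_{1} \in L^{\infty}$ and $\alpha(G_{2})$ arbitrarily small, using part (2) of Lemma~\ref{property of alpha} to bound $\alpha(G_{1}\chi_{E}) \leq \Vert G_{1}\Vert_{\infty}\alpha(\chi_{E})$, and invoking the continuity of $\alpha$. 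Applied with $G = 2|f| \in L^{\alpha}$, this delivers $f \in [H^{\infty}]^{-\alpha} = H^{\alpha}$.

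The main obstacle is the absence of translation invariance for $\alpha$, which rules out the more direct approaches via Poisson dilates $f_{r}(z) = f(rz)$ or Fej\'er means $\sigma_{N}f$ (both of which would need a Minkowski-type integral inequality for $\alpha$ that demands rotation invariance). The outer-function device sidesteps this difficulty by producing approximants whose moduli are pointwise dominated by $|f|$, reducing $\alpha$-convergence to a single invocation of the dominated-convergence theorem that the continuity hypothesis on $\alpha$ precisely supplies.
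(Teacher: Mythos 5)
Your proof is correct, but it takes a genuinely different route from the paper's. The paper proves the nontrivial inclusion $H^{1}\cap L^{\alpha}\subseteq H^{\alpha}$ by duality: given $0\neq f\in H^{1}\cap L^{\alpha}$ and a functional $\varphi\in(L^{\alpha})^{\sharp}$ annihilating $H^{\alpha}$, Proposition \ref{dualspace} represents $\varphi$ by some $h\in\mathcal{L}^{\alpha^{\prime}}$, the vanishing of $\varphi$ on the monomials $z^{n}$ ($n\geq0$) forces $h$ to be analytic with $h(0)=0$, whence $fh\in H^{1}$ and $\varphi(f)=\int_{\mathbb{T}}fh\,dm=f(0)h(0)=0$; the Hahn--Banach theorem then gives $f\in H^{\alpha}$. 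You instead build explicit $H^{\infty}$ approximants $f\phi_{n}$ by multiplying by outer functions of modulus $\min(1,n/|f|)$ --- a device very much in the spirit of the paper's own $k=1/(|f|+1)$ trick in Proposition \ref{decomposition in L^alpha} and Theorem \ref{Saito}, though not of its proof of this particular lemma --- and you push the convergence through a dominated-convergence principle for $\alpha$ derived from the continuity hypothesis via Egorov together with the uniform absolute continuity $\alpha(G\chi_{E})\to0$ as $m(E)\to0$ for $G\in L^{\alpha}$. That last step is where the hypothesis $G\in L^{\alpha}$ (rather than merely $\mathcal{L}^{\alpha}$) matters, and it does hold here since $f\in L^{\alpha}$ implies $|f|\in L^{\alpha}$. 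What the paper's route buys is brevity, given that Proposition \ref{dualspace} is already in hand; what yours buys is independence from the description of $(L^{\alpha})^{\sharp}$, at the price of importing the weak-$(1,1)$ estimate for the conjugate function and proving the $\alpha$-dominated-convergence lemma --- which is, incidentally, a reusable strengthening of part (5) of Lemma \ref{property of alpha} that the paper never states explicitly.
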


\begin{proof}
Since $\alpha$ is $\| \|_{1}$-dominating, $\alpha$-convergence implies
$\| \|_{1}$-convergence, thus $H^{\alpha}=[H^{\infty}]^{-\alpha}\subset
[H^{\infty}]^{-\| \|_{1}}=H^{1}.$ Also, $H^{\alpha}=[H^{\infty}]^{-\alpha
}\subset[L^{\infty}]^{-\alpha}=L^{\alpha},$ thus $H^{\alpha}\subset H^{1}\cap
L^{\alpha}.$ Now we suppose $0\neq f\in H^{1}$ $\cap L^{\alpha}$ and
$\varphi \in \left(  L^{\alpha}\right)  ^{\#}$ with $\varphi|H^{\alpha}$ $=0.$
It follows from Proposition \ref{dualspace} that there is an $h\in
\mathcal{L}^{\alpha^{\prime}}$ such that $\varphi \left(  v\right)
=\int_{\mathbb{T}}vhdm$ for all $f\in L^{\alpha}.$ From part (4) of Lemma
\ref{property of alpha} and Lemma \ref{property of alpha prime}, we see
$h\in \mathcal{L}^{\alpha^{\prime}}$ $\subset L^{1}$, so we can write $h\left(
z\right)  =\sum_{n=-\infty}^{\infty}c_{n}z^{n}$. Since $\varphi|H^{\alpha}$
$=0,$ we have%
\[
c_{-n}=\int_{\mathbb{T}}hz^{n}dm=\varphi(z^{n})=0
\]
for all $n\geq0.$ Thus $h$ is analytic and $h\left(  0\right)  =0.$ The fact
that $h\in \mathcal{L}^{\alpha^{\prime}}$ and $f\in L^{\alpha}$ $\cap H^{1}$
imply that $fh$ is analytic and $fh\in L^{1}$, which means $fh\in H^{1}$ $.$
Hence%
\[
\varphi \left(  f\right)  =\int_{\mathbb{T}}fhdm=f\left(  0\right)  h\left(
0\right)  =0.
\]
Since $\varphi|H^{\alpha}$ $=0$ and $f\neq0,$ it follows from the Hahn Banach
theorem that $f\in H^{\alpha},$ which implies $H^{1}$ $\cap L^{\alpha}$
$\subset H^{\alpha}.$ Therefore $H^{1}$ $\cap L^{\alpha}= H^{\alpha}.$
\end{proof}

A key ingredient is based on the following result that uses the Herglotz
kernel \cite{Duren}.

\begin{lemma}
\label{herglotz}$\left \{  \left \vert g\right \vert :0\neq g\in H^{1}\right \}
=\left \{  \varphi \in L^{1}:\varphi \geq0\text{ and }\log \varphi \in
L^{1}\right \}  $. In fact, if $\varphi \geq0$ and $\varphi,\ \log \varphi \in
L^{1}$, then
\[
g\left(  z\right)  =\exp \int_{\mathbb{T}}\frac{w+z}{w-z}\log \varphi \left(
w\right)  dm\left(  w\right)
\]
defines an outer function $g$ on $\mathbb{D}$ and $\left \vert g\right \vert
=\varphi$ on $\mathbb{T}$.
\end{lemma}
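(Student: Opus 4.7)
The plan is to verify the set equality by proving both inclusions, using classical properties of subharmonic functions and the Herglotz/Poisson integral. The second sentence of the statement supplies the explicit formula needed for the harder direction.

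For the inclusion $\{|g|:0\neq g\in H^{1}\}\subseteq \{\varphi\geq 0:\log\varphi\in L^{1}\}$, I would start with a nonzero $g\in H^{1}$ and first reduce to the case $g(0)\neq 0$ by dividing out $z^{k}$ where $k$ is the order of the zero of $g$ at the origin (this leaves $|g|$ unchanged on $\mathbb{T}$ and keeps $g$ in $H^{1}$). Since $g$ is analytic and nonvanishing in a neighborhood of $0$, $\log|g|$ is subharmonic there, so the sub-mean-value inequality gives $\log|g(0)|\leq\int_{\mathbb{T}}\log|g(re^{i\theta})|\,dm$ for every $r\in(0,1)$. Combining this with $\log^{+}|g|\leq |g|\in L^{1}$ and Fatou's lemma applied to $\log^{+}|g(re^{i\theta})|-\log|g(re^{i\theta})|$ as $r\to 1^{-}$ yields a finite upper bound on $\int_{\mathbb{T}}\log^{-}|g|\,dm$, so $\log|g|\in L^{1}$.

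For the reverse inclusion, given $\varphi\geq 0$ with $\varphi,\log\varphi\in L^{1}$, I would take the $g$ defined in the statement. The Herglotz kernel $\tfrac{w+z}{w-z}$ is holomorphic in $z\in\mathbb{D}$ and its real part equals the Poisson kernel $P(z,w)$, so the integral defining $\log g$ is holomorphic in $z$, making $g$ analytic and nonvanishing on $\mathbb{D}$. Taking real parts gives $\log|g(z)|=\int_{\mathbb{T}}P(z,w)\log\varphi(w)\,dm(w)$, the Poisson integral of $\log\varphi\in L^{1}$, so by Fatou's theorem on Poisson integrals the radial boundary values of $\log|g|$ equal $\log\varphi$ a.e., hence $|g|=\varphi$ a.e.\ on $\mathbb{T}$. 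To upgrade ``analytic'' to ``$H^{1}$'', I would apply Jensen's inequality (for $\exp$ against the probability measure $P(z,\cdot)\,dm$) to obtain $|g(re^{i\theta})|\leq \int_{\mathbb{T}}P_{r}(\theta-t)\varphi(t)\,dm(t)$; integrating in $\theta$ and swapping order via Fubini (using $\int P_{r}\,dm=1$) gives $\int_{\mathbb{T}}|g(re^{i\theta})|\,dm\leq\|\varphi\|_{1}$ uniformly in $r$, so $g\in H^{1}$. The outer property is immediate from the exponential form of the representation.

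The main subtlety is keeping straight two different invocations of Jensen-type inequalities: the sub-mean-value property of the subharmonic function $\log|g|$ in the forward direction (to control $\log^{-}|g|$ from below by $\log|g(0)|$), versus the pointwise Jensen inequality for $\exp$ against the Poisson probability measure in the reverse direction (to control $|g(re^{i\theta})|$ by $(P_{r}*\varphi)(\theta)$). Both are classical, but they play complementary roles and should not be conflated. Otherwise the argument is a direct application of the Herglotz representation and standard Hardy-space boundary-value theory, so no machinery beyond what is cited in \cite{Duren} is required.
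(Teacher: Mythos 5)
The paper states this lemma without proof, simply citing \cite{Duren}, and your argument is exactly the classical proof found there: subharmonicity of $\log|g|$ (after dividing out the zero at the origin) together with Fatou's lemma for the inclusion $\{|g|\}\subseteq\{\varphi\}$, and the Herglotz--Poisson representation plus Jensen's inequality and Fubini for the reverse inclusion and the $H^{1}$ bound. The proof is correct and complete at the intended level of detail, so nothing needs to be changed.
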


Combining Lemma \ref{intersection} and Lemma \ref{herglotz}, we obtain the
following factorization theorem.

\begin{proposition}
\label{decomposition in L^alpha}Let $\alpha$ be a continuous $\Vert \Vert_{1}%
$-dominating normalized gauge norm. If $k\in L^{\infty}$ and $k^{-1}\in
L^{\alpha}$, then there is a unimodular function $w\in L^{\infty}$ and an
outer function $h\in H^{\infty}$ such that $k=wh$ and $h^{-1}\in H^{\alpha}$
$.$
\end{proposition}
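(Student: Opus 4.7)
The plan is to construct $h$ as the outer function with $|h|=|k|$ a.e.\ on $\mathbb{T}$ via the Herglotz formula of Lemma~\ref{herglotz}, then set $w=k/h$, which is automatically unimodular, so that $k=wh$. The substantive part is the claim $h^{-1}\in H^{\alpha}$; I will show that $1/h$ is itself an outer function with boundary modulus $|k^{-1}|$, and then invoke Lemma~\ref{intersection} to reduce $H^{\alpha}$ to $H^{1}\cap L^{\alpha}$.

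The first technical step is verifying $\log|k|\in L^{1}$, the hypothesis required by Lemma~\ref{herglotz}. Since $k\in L^{\infty}$, $\log|k|$ is bounded above by $\log\|k\|_{\infty}$. For a lower bound, note that $k^{-1}\in L^{\alpha}\subset L^{1}$ by part~(4) of Lemma~\ref{property of alpha}, so $1/|k|\in L^{1}$; the elementary estimate $\log^{+}x\leq x$ then gives $\log(1/|k|)\in L^{1}$, equivalently $\log|k|\in L^{1}$. I now apply Lemma~\ref{herglotz} with $\varphi=|k|$ to obtain an outer $h$ with $|h|=|k|$ on $\mathbb{T}$; the Herglotz integral representation combined with Jensen's inequality for the positive Poisson kernel gives $|h(z)|\leq\|k\|_{\infty}$ on $\mathbb{D}$, so $h\in H^{\infty}$. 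Applying the lemma again with $\varphi=1/|k|$ produces an outer $g$ with $|g|=1/|k|$; comparing exponents shows $h\cdot g\equiv 1$, so $g=h^{-1}$, which is therefore outer with $|h^{-1}|=|k^{-1}|$ on $\mathbb{T}$.

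It remains to show $h^{-1}\in H^{\alpha}$. By Lemma~\ref{intersection}, $H^{\alpha}=H^{1}\cap L^{\alpha}$, so both inclusions must be verified. For $h^{-1}\in H^{1}$: the characterization in Lemma~\ref{herglotz} guarantees an $F\in H^{1}$ with $|F|=|k^{-1}|$, and the inner-outer factorization of $F$ identifies its outer factor with $h^{-1}$ up to a unimodular scalar, so $h^{-1}\in H^{1}$. For $h^{-1}\in L^{\alpha}$: the key observation is that $L^{\alpha}$ is invariant under multiplication by $L^{\infty}$ (if $f_{n}\in L^{\infty}$ with $\alpha(f_{n}-f)\to 0$ and $u\in L^{\infty}$, then $\alpha(uf_{n}-uf)\leq\|u\|_{\infty}\alpha(f_{n}-f)\to 0$). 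Multiplying $k^{-1}\in L^{\alpha}$ by the unimodular function $\overline{k^{-1}}/|k^{-1}|$ gives $|k^{-1}|\in L^{\alpha}$, and multiplying once more by the unimodular $h^{-1}/|h^{-1}|$ gives $h^{-1}\in L^{\alpha}$. Finally, $w:=k/h$ satisfies $|w|=1$ a.e., so $w$ is unimodular and in $L^{\infty}$, and $k=wh$ is the desired factorization. The most delicate point is the transfer of $L^{\alpha}$-membership from $k^{-1}$ to $h^{-1}$, which hinges on the invariance of $L^{\alpha}$ under bounded multipliers.
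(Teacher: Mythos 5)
Your proof is correct and follows essentially the same route as the paper's: verify that $\log|k|$ is integrable from the two-sided bound $-|k|\leq\log|k^{-1}|\leq|k^{-1}|$, invoke Lemma~\ref{herglotz} to produce the outer factor, and recognize $h^{-1}$ as a unimodular multiple of $k^{-1}$ so that Lemma~\ref{intersection} places it in $L^{\alpha}\cap H^{1}=H^{\alpha}$. The only cosmetic difference is that the paper builds the outer function $g$ with $|g|=|k^{-1}|$ first and sets $h=g^{-1}$, whereas you build $h$ with $|h|=|k|$ first and obtain $g=h^{-1}$ by a second application of the Herglotz formula.
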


\begin{proof}
Recall that an outer function is uniquely determined by its absolute boundary
values, which are necessarily absolutely log integrable. Suppose $k\in
L^{\infty}$ with $k^{-1}\in L^{\alpha}.$ Observe that on the unit circle,
\[
-|k|\leq-\log|k|=\log|k^{-1}|\leq|k^{-1}|,
\]
it follows from $k\in L^{\infty}$ and $k^{-1}\in L^{\alpha}\subset L^{1}$
that
\[
-\infty<-\int_{\mathbb{T}}|k|dm\leq \int_{\mathbb{T}}\log|k^{-1}|dm\leq
\int_{\mathbb{T}}|k^{-1}|dm<\infty,
\]
and hence $|k^{-1}|$ is log integrable. Then by Lemma \ref{herglotz}, there is
an outer function $g\in H^{1}$ such that $|g|=|k^{-1}|$ on $\mathbb{T}.$ If we
let $h=g^{-1}$, then let $w=kg.$ Since $g$ is outer, $h=g^{-1}$ is analytic on
$\mathbb{D}.$ Also, on the unit circle $\mathbb{T},$ $|h|=|g^{-1}|=|k|\in
L^{\infty},$ so $h\in H^{\infty}.$ Moreover, $|g|=|k^{-1}|$ implies $w=kg$ is
unimodular. Hence $k=wh$ where $w$ is unimodular and $h\in H^{\infty}.$
Furthermore, it follows from Lemma \ref{intersection} that $h^{-1}%
=g=wk^{-1}\in L^{\alpha}\cap H^{1}=H^{\alpha}.$
\end{proof}

Before we state our main results, we need the following  lemmas.

\begin{lemma}
\label{weakly closed} Suppose $X$ is a Banach space and $M$ is a closed linear
subspace of $X.$ Then $M$ is weakly closed.
\end{lemma}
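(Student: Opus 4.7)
The plan is to prove the contrapositive by showing that the complement $X \setminus M$ is weakly open; equivalently, to show that every point $x_0 \notin M$ has a weak neighborhood disjoint from $M$. The single tool needed is the Hahn--Banach separation theorem, applied to the closed convex set $M$ and the singleton $\{x_0\}$.

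First, given $x_0 \in X \setminus M$, since $M$ is a norm-closed convex set and $\{x_0\}$ is a compact convex set disjoint from $M$, Hahn--Banach produces a continuous linear functional $\varphi \in X^{\sharp}$ and real numbers $a < b$ such that
\[
\operatorname{Re} \varphi(m) \leq a < b \leq \operatorname{Re} \varphi(x_0) \qquad \text{for all } m \in M.
\]
Next I would use the fact that $M$ is a \emph{subspace}: if $m \in M$ then $\lambda m \in M$ for every scalar $\lambda$, so $\lambda \operatorname{Re} \varphi(m) = \operatorname{Re} \varphi(\lambda m) \leq a$ for every real $\lambda$. This forces $\operatorname{Re} \varphi(m) = 0$ on $M$, and replacing $m$ by $im$ (in the complex case) gives $\varphi(m) = 0$ for every $m \in M$. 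In particular $|\varphi(x_0)| \geq b > 0$.

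Finally, I would exhibit the desired weak neighborhood explicitly. Set
\[
U = \Bigl\{ x \in X : |\varphi(x) - \varphi(x_0)| < \tfrac{1}{2}|\varphi(x_0)| \Bigr\}.
\]
Then $U$ is weakly open (it is the preimage of an open disk in $\mathbb{C}$ under the weakly continuous map $\varphi$), it contains $x_0$, and it is disjoint from $M$ because $\varphi(m) = 0$ on $M$ forces $|\varphi(m) - \varphi(x_0)| = |\varphi(x_0)|$. Thus $X \setminus M$ is weakly open, so $M$ is weakly closed.

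There is essentially no obstacle here: the argument is a textbook consequence of Hahn--Banach separation, and the only small care needed is to convert the real-linear separation inequality into the statement that $\varphi$ vanishes identically on the subspace $M$, which uses homogeneity in both real and imaginary scalars.
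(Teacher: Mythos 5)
Your argument is correct: Hahn--Banach separation of the point $x_0$ from the norm-closed convex set $M$, followed by the homogeneity argument showing the separating functional vanishes on the subspace, is the standard proof that a norm-closed subspace is weakly closed, and every step you give (including the explicit weak neighborhood $U$) checks out. The paper itself states this lemma without any proof, treating it as a known consequence of Hahn--Banach (Mazur's theorem), so there is nothing to compare against; your write-up simply supplies the standard argument the author omitted.
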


\begin{lemma}
\label{translation lemma} Let $\alpha$ be a continuous rotationally symmetric norm
. If $M$ is a closed subspace of $H^{\alpha}  $ invariant under $M_{z},$ which means $zM\subset M,$ then
$H^{\infty} M\subset M.$
\end{lemma}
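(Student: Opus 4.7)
The plan is a standard polynomial-approximation argument adapted to the $\alpha$-norm: approximate an arbitrary $f\in H^{\infty}$ by polynomials in $z$ in a way that survives multiplication by any element of $M$, then exploit the $z$-invariance and closedness of $M$.

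First, I would produce polynomials $p_n$ with $\|p_n\|_{\infty}\le\|f\|_{\infty}$ and $p_n\to f$ a.e.\ on $\mathbb{T}$. The Fej\'er (Ces\`aro) means of the Taylor expansion of $f$ serve: because $f\in H^{\infty}$ has only nonnegative Fourier frequencies, each $\sigma_n(f)$ is a polynomial of degree $\le n$ in $z$; and since the Fej\'er kernel is a positive approximate identity, $\|\sigma_n(f)\|_{\infty}\le\|f\|_{\infty}$ and $\sigma_n(f)\to f$ a.e. Because $p_n-f$ lies in the multiple $2\|f\|_{\infty}\mathbb{B}$ of the closed unit ball of $L^{\infty}$ and tends to $0$ in measure, part (1) of Lemma \ref{property of unit ball} yields $\alpha(p_n-f)\to 0$.

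Next, given $g\in M$, each $p_n g$ lies in $M$ since $zM\subseteq M$ and $M$ is a linear subspace. To pass to the limit, I would show $\alpha(p_n g-fg)\to 0$. When $g\in L^{\infty}$ this is immediate from part (2) of Lemma \ref{property of alpha}:
\[
\alpha((p_n-f)g)\le \|g\|_{\infty}\,\alpha(p_n-f)\to 0.
\]
For a general $g\in H^{\alpha}$, I would invoke the very definition $H^{\alpha}=[H^{\infty}]^{-\alpha}$: given $\varepsilon>0$, pick $g'\in H^{\infty}$ with $2\|f\|_{\infty}\,\alpha(g-g')<\varepsilon/2$, and then split
\[
\alpha((p_n-f)g)\le \|p_n-f\|_{\infty}\,\alpha(g-g')+\|g'\|_{\infty}\,\alpha(p_n-f),
\]
where the first term is $<\varepsilon/2$ by choice of $g'$ and the second is eventually $<\varepsilon/2$ by the previous paragraph. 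Hence $p_n g\to fg$ in $\alpha$, and closedness of $M$ yields $fg\in M$, proving $H^{\infty}M\subseteq M$.

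The main obstacle is this last estimate: when $g\in H^{\alpha}\setminus L^{\infty}$ one cannot directly pull $g$ out of $\alpha(\,\cdot\,)$ via its $L^{\infty}$-norm, so a naive use of Lemma \ref{property of alpha}(2) fails to give uniform-in-$n$ control. The $\varepsilon/2$ split above, combining the $\alpha$-density of $H^{\infty}$ in $H^{\alpha}$ with the bounded-factor inequality of Lemma \ref{property of alpha}(2), is precisely what sidesteps this difficulty.
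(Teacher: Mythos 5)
Your proof is correct, but it takes a genuinely different route from the paper's. The paper also uses the Ces\`aro means $\sigma_n(f)$, but it only establishes \emph{weak} convergence $\sigma_n(f)h\to fh$ in $L^{\alpha}$: it pairs $\sigma_n(f)hu$ against an arbitrary $u\in\mathcal{L}^{\alpha'}$, invokes the duality $(L^{\alpha})^{\sharp}=\mathcal{L}^{\alpha'}$ (Proposition \ref{dualspace}) together with the weak* convergence of Fej\'er means in $L^{\infty}$, and then concludes via the fact that a norm-closed subspace is weakly closed (Lemma \ref{weakly closed}, i.e.\ Mazur's theorem). You instead prove \emph{norm} convergence $\alpha(\sigma_n(f)g-fg)\to 0$ directly: the uniform bound $\|\sigma_n(f)\|_{\infty}\le\|f\|_{\infty}$ plus a.e.\ (hence in-measure) convergence lets you apply Lemma \ref{property of unit ball}(1) on a dilate of $\mathbb{B}$ to get $\alpha(\sigma_n(f)-f)\to 0$, and your $\varepsilon/2$-splitting through an $H^{\infty}$ approximant $g'$ of $g$ correctly handles the fact that $g$ itself need not be bounded, since in the cross term the bounded factor $p_n-f$ (not $g-g'$) is the one pulled out in sup norm via Lemma \ref{property of alpha}(2). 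Your approach buys a stronger conclusion (norm rather than weak convergence of $\sigma_n(f)g$) and avoids both the duality theorem and the weak-closure step, at the cost of relying on the quantitative coincidence of the $\alpha$- and measure topologies on bounded sets; the paper's approach is softer functional analysis and does not need that lemma. Both arguments reduce membership of $fg$ in $M$ to the closedness of $M$ and the elementary observation that polynomial multiples of $M$ stay in $M$, so the proofs are equally valid.
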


\begin{proof}
Let ${\mathcal P}_{+}=\{e_n: n\in\mathbb N\}$ denote the class of all polynomials in $H^\infty,$ where $e_n(z)=z^n$ for all $z$ in the unit circle $\mathbb T.$
Since $zM\subset M,$ we see $P(z)M\subset M$  for any polynomial $P\in{\mathcal{P}}_{+}.$
To complete the proof, it suffices to show that  $fh\in M$ for every $ h\in M$ and every $ f\in  H^{\infty}.$  Now we assume that $u$ is a nonzero element in
$ \mathcal L^{\alpha^\prime},$ then
it follows from Proposition \ref{dualspace} that  $hu\in M {\mathcal L^{\alpha^\prime}}\subset L^\alpha (L^\alpha)^{\sharp} \subset L^1.$ Since $f\in H^\infty,$
we obtain  $\hat{f}(n)=\int_{\mathbb T}f(z)z^{-n}dm(z)=0$ for all $n<0,$ which implies that the  partial sums
$S_{n}(f)=\sum_{k=-n}^{n}\hat{f}(n)e_{n}=\sum_{k=0}^{n}\hat{f}(n)e_{n}%
\in{\mathcal{P}}_{+}$ for all $n<0.$ Hence the Cesaro means
$$\sigma_{n}(f)=\frac{S_{0}(f)+S_{1}(f)+\ldots+S_{n}(f)}{n+1}\in \mathcal{P}%
_{+}.$$ Moreover, we know that $\sigma_{n}(f)\rightarrow f$ in the
weak* topology.  Since $hu\in L^1,$ we have
\[
\int_{\mathbb{T}}\sigma_{n}(f)hudm\rightarrow \int_{\mathbb{T}}fhudm.
\]
Observe that $\sigma_{n}(f)h\in{\mathcal{P}_{+}}M\subset M\subset
L^{\alpha}({\mathbb{T}})$ and $u\in{\mathcal{L}}^{\alpha^{\prime}}%
(\mathbb{T}),$ it follows that $\sigma_{n}(f)h\rightarrow fh$ weakly. Since $M$ is a closed subspace of $H^\alpha(\mathbb T),$
 by Lemma \ref{weakly closed}, we see that
$M$ is weakly closed, which means $fh\in M.$  This
completes the proof.
\end{proof}

The following Lemma is the Krein-Smulian theorem.

\begin{lemma}\label{Krein Smulian theorem}
Let $X$ be a Banach space. A convex set in $X^{\sharp}$ is weak* closed if and
only if its intersection with $\{ \phi \in X^{\sharp}: \| \phi \| \leq1\}$ is
weak* closed.
\end{lemma}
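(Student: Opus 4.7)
The plan is to prove the Krein--Smulian theorem by the standard route. The ``only if'' direction is immediate: by Banach--Alaoglu the unit ball $B=\{\phi\in X^{\sharp}:\|\phi\|\leq 1\}$ is weak*-compact, hence weak*-closed, so its intersection with any weak*-closed set is weak*-closed.

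For the ``if'' direction, assume $C$ is convex and $C\cap nB$ is weak*-closed for every $n\geq 1$. Since the weak* topology is translation invariant, it suffices to prove that whenever $0\notin C$ there is a weak* open neighborhood of $0$ disjoint from $C$. The core of the argument is to construct, for each $n\geq 1$, a finite set $F_{n}\subset X$ with $\|x\|\leq 1/n$ for all $x\in F_{n}$, having the property that every $\phi\in C\cap nB$ satisfies $\sup_{x\in F_{n}}|\phi(x)|\geq 1$. Assembling $F=\bigcup_{n}F_{n}$ and enumerating it as a norm-null sequence $(x_{k})$ in $X$, the closed absolutely convex hull $K$ of $\{x_{k}\}\cup\{0\}$ is then a norm-compact subset of $X$ (since $\|x_{k}\|\to 0$), and its polar $K^{\circ}=\{\phi\in X^{\sharp}:\sup_{x\in K}|\phi(x)|\leq 1\}$ is a weak*-closed absolutely convex neighborhood of $0$ in $X^{\sharp}$. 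Any $\phi\in C$ satisfies $\|\phi\|\leq n$ for some $n$, and then the corresponding $x\in F_{n}\subset K$ gives $|\phi(x)|\geq 1$, forcing $\phi\notin\mathrm{int}(K^{\circ})$. The weak* interior of $K^{\circ}$ is therefore a weak* open neighborhood of $0$ disjoint from $C$, completing the proof.

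The main obstacle is the inductive construction of the sets $F_{n}$. One builds them by induction on $n$: at stage $n$, the set $D_{n}=\{\phi\in C\cap nB:\sup_{x\in F_{n-1}}|\phi(x)|\leq 1\}$ is weak*-compact and, because $0\notin C$, does not contain $0$; a Hahn--Banach separation in the locally convex space $(X^{\sharp},\text{weak}^{*})$, combined with the identification of weak*-continuous linear functionals on $X^{\sharp}$ as point evaluations by elements of $X$, produces separating points in $X$. The delicate bookkeeping is to ensure simultaneously that the added points lie in $(1/n)B_{X}$ and that the separation constant remains at least $1$ uniformly in $n$; this requires careful scaling of the separating functionals that exploits the uniform bound $\|\phi\|\leq n$ on $D_{n}$. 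Once the sequence $(F_{n})$ is in place, the norm-compactness of $K$ and the bipolar identification are routine.
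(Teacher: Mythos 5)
The paper offers no proof of this lemma at all---it is quoted as the classical Krein--Smulian theorem---so I can only assess your argument on its own terms. It follows the standard outline, but two steps do not hold up, and the second is fatal. First, the per-stage property you demand of $F_{n}$, namely $F_{n}\subset\frac{1}{n}B_{X}$ and $\sup_{x\in F_{n}}|\phi(x)|\geq 1$ for \emph{every} $\phi\in C\cap nB$, is unachievable: if $\|\phi\|<n$ then $|\phi(x)|\leq\|\phi\|\,\|x\|<1$ for every $x\in\frac{1}{n}B_{X}$, so no such set, finite or not, exists once $C\cap nB$ contains an element of norm less than $n$ (e.g.\ any element of $C\cap B$ when $n\geq2$). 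The same norm bound defeats the proposed construction: a single Hahn--Banach separation of $D_{n}$ from $0$ yields one $x$ with $\inf_{\phi\in D_{n}}\mathrm{Re}\,\phi(x)=c>0$, and rescaling $x$ into $\frac{1}{n}B_{X}$ while keeping the constant at least $1$ would require $1\leq\|\phi\|\,\|x\|\leq 1$ for all $\phi\in D_{n}$, i.e.\ every such $\phi$ has norm exactly $n$ and attains it. The standard construction is cumulative: one arranges that no $\phi\in C\cap(n+1)B$ satisfies $|\phi(x)|\leq1$ for all $x\in F_{0}\cup\cdots\cup F_{n}$, and $F_{n}$ is produced by a finite-intersection-property argument on the weak*-compact sets $\{\phi\in C\cap(n+1)B:|\phi(x)|\leq1\ \text{for}\ x\in F_{0}\cup\cdots\cup F_{n-1}\cup F\}$, a common point of which would have $\|\phi\|\leq n$ and contradict the previous stage.

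Second, and this is the step that cannot be repaired within your framework, the polar $K^{\circ}$ of the norm-compact set $K$ is \emph{not} a weak* neighborhood of $0$ in $X^{\sharp}$. A weak* neighborhood of $0$ contains the annihilator of some finite subset of $X$, and $K^{\circ}$ contains such an annihilator only when $\mathrm{span}\{x_{k}\}$ is finite dimensional; in general the weak* interior of $K^{\circ}$ is empty, so the sentence ``the weak* interior of $K^{\circ}$ is a weak* open neighborhood of $0$ disjoint from $C$'' establishes nothing. ($K^{\circ}$ is a neighborhood of $0$ only in the bounded-weak* topology, and passing from that to a genuine weak* separation is precisely the content of Krein--Smulian/Banach--Dieudonn\'e.) The standard way to finish is to use the norm-null sequence directly: the map $\phi\mapsto(\phi(x_{k}))_{k}$ sends $X^{\sharp}$ into $c_{0}$, the image of $C$ is convex and misses the open unit ball of $c_{0}$, so Hahn--Banach in $c_{0}$ gives $f\in\ell^{1}$ with $\|f\|_{1}\leq1$ and $\mathrm{Re}\sum_{k}f_{k}\phi(x_{k})\geq1$ for all $\phi\in C$; then $x_{0}=\sum_{k}f_{k}x_{k}$ converges in norm in $X$ and $\{\phi:\mathrm{Re}\,\phi(x_{0})<1\}$ is an honest weak* open neighborhood of $0$ disjoint from $C$. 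You need this single separating vector $x_{0}\in X$; the polar of $K$ is not a substitute for it.
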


The following theorem gives us a most general version of invariant subspace structure.

\begin{theorem}
\label{Saito}Suppose $\alpha$ is a continuous $\Vert \Vert_{1}$-dominating
normalized gauge norm. Let $W$ be an $\alpha$-closed linear subspace of
$L^{\alpha}$, and $M$ be a weak*-closed linear subspace of $L^{\infty}$ such
that $zM\subseteq M$ and $zW\subseteq W$. Then

\begin{enumerate}
\item $M=[M]^{-\alpha}\cap L^{\infty};$

\item $W \cap M$ is weak*-closed in $L^{\infty};$

\item $W=[ W\cap L^{\infty}] ^{-\alpha}.$
\end{enumerate}
\end{theorem}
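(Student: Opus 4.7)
The plan is to drive all three parts with a single outer-function device built from Lemma \ref{herglotz}, together with the equality of the $\|\cdot\|_2$- and $\alpha$-topologies on the $L^\infty$-unit ball $\mathbb B$ from Lemma \ref{=top}, and an $\alpha$-version of the dominated convergence theorem extracted from Lemma \ref{property of alpha}(5) plus absolute continuity of $\alpha$ on $L^\alpha$ (itself a consequence of density of $L^\infty$ in $L^\alpha$ and continuity of $\alpha$). First I would record that $H^\infty M \subseteq M$ and $H^\infty W \subseteq W$ via a Ces\`aro-means argument modeled on Lemma \ref{translation lemma}: for $f\in H^\infty$, the Ces\`aro sums $\sigma_n(f)$ are analytic polynomials bounded by $\|f\|_\infty$ converging to $f$ both weak* in $L^\infty$ and a.e.\ on $\mathbb T$, so $\sigma_n(f)g\to fg$ weak* in $L^\infty$ (for $g\in M$) or weakly in $L^\alpha$ (for $g\in W$, testing against $u\in\mathcal L^{\alpha'}$ via Proposition \ref{dualspace}), and Lemma \ref{weakly closed} places $fg$ in the respective closed subspace.

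The core construction, handling (1) and (3) together, goes as follows. Given the target $f$, I would build a nonnegative $\Psi\in L^\alpha$ with $\Psi\ge 1$ dominating $|f|$ and, in (1), dominating also the approximating sequence: for (3) take $\Psi=1+|f|$; for (1) pick $f_n\in M$ with $\alpha(f_n-f)<2^{-n}$ and set $\Psi=1+|f|+\sum_n|f_n-f|$, which lies in $L^\alpha$ by completeness (Lemma \ref{property of alpha}(6)). Since $\Psi\ge 1$ and $\Psi\in L^1$, $\log\Psi\in L^1$, so Lemma \ref{herglotz} produces an outer $k\in H^\infty$ with $|k|=1/\Psi$. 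For each $n$ let $k_n\in H^\infty$ be outer with $|k_n|=\min(\Psi,n)$ and set $h_n=k_n k$, an outer $H^\infty$ function with $|h_n|=\min(1,n/\Psi)\le 1$. Using the normalization $h_n(0)=\exp\int_{\mathbb T}\log|h_n|\,dm$ from the Herglotz formula, the key calculation is
\[
\|h_n-1\|_2^2 \;=\; \int_{\mathbb T}|h_n|^2\,dm \;-\; 2h_n(0) \;+\; 1 \;\longrightarrow\; 0,
\]
because $\int|h_n|^2\,dm\to 1$ by monotone convergence on $\min(1,n/\Psi)^2\uparrow 1$, and $h_n(0)\to 1$ by dominated convergence on $-\log|h_n|=\max(0,\log(\Psi/n))\to 0$, dominated by $(\log\Psi)^+\le\Psi\in L^1$. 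In particular $h_n\to 1$ in measure.

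From here (3) is immediate: $kf\in L^\infty\cap H^\infty W\subseteq W\cap L^\infty$, and $h_nf=k_n(kf)\in W\cap L^\infty$ satisfies $|h_nf-f|\le 2|f|\in L^\alpha$ with $h_nf\to f$ in measure, so the $\alpha$-DCT gives $\alpha(h_nf-f)\to 0$. For (1), $kf_n\in H^\infty M\subseteq M$ is uniformly bounded by $1$ and $\alpha(kf_n-kf)\le\|k\|_\infty\alpha(f_n-f)\to 0$, so $kf_n\to kf$ in measure; boundedness plus measure convergence upgrades this to weak* convergence in $L^\infty$, and weak*-closedness of $M$ puts $kf\in M$. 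Then $h_nf=k_n(kf)\in M$ is uniformly bounded by $\|f\|_\infty$ and converges in measure, hence weak*, to $f$, so $f\in M$.

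For (2), I would write $W\cap M=(W\cap L^\infty)\cap M$; since $M$ is already weak*-closed, it suffices to prove that $W\cap L^\infty$ is weak*-closed in $L^\infty$. By Krein--Smulian (Lemma \ref{Krein Smulian theorem}) this reduces to $W\cap\mathbb B$ being weak*-closed in the metrizable space $(\mathbb B,w^*)$. If $f_n\in W\cap\mathbb B$ converges weak* to $f\in\mathbb B$, then $f_n\to f$ weakly in $L^2$, and Mazur's lemma supplies convex combinations $g_n\in W\cap\mathbb B$ with $\|g_n-f\|_2\to 0$; Lemma \ref{=top} then converts this into $\alpha(g_n-f)\to 0$, and $\alpha$-closedness of $W$ forces $f\in W$. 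The single technical hurdle throughout is the $L^2$-convergence $h_n\to 1$; once that is in hand, the rest is organized bookkeeping with $\alpha$-dominated convergence, Krein--Smulian, and Mazur.
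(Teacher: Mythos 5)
Your argument is correct, and for parts (1) and (3) it takes a genuinely different route from the paper's. The paper proves (1) by a Hahn--Banach separation argument: assuming $w\in[M]^{-\alpha}\cap L^{\infty}\setminus M$, it produces a separating $F\in L^{1}$ and applies the factorization of Proposition \ref{decomposition in L^alpha} to $k=\tfrac{1}{1+|F|}$, regularizing $F$ by $h_nh$ with $\|h_n-\tfrac1h\|_1\to0$ to reach a contradiction; for (3) it applies the same factorization to $k=\tfrac{1}{1+|f|}$ and approximates $\tfrac1h\in H^{\alpha}$ by $H^{\infty}$ functions in the $\alpha$-norm. You instead avoid duality in (1) entirely and work constructively on the element itself: the outer contractions $h_n$ with $|h_n|=\min(1,n/\Psi)$, together with the clean computation $\|h_n-1\|_2^2=\int|h_n|^2\,dm-2h_n(0)+1\to0$ from the Herglotz normalization, push the target into $M$ (via boundedness plus convergence in measure, hence weak* convergence) and into $[W\cap L^{\infty}]^{-\alpha}$ (via $\alpha$-dominated convergence). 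Both approaches rest on the same engine --- Lemma \ref{herglotz} plus $H^{\infty}$-invariance \`a la Lemma \ref{translation lemma} --- but yours treats (1) and (3) uniformly through the single dominant $\Psi$, at the cost of two auxiliary facts the paper never needs: absolute continuity of $\alpha$ on $L^{\alpha}$ and the resulting $\alpha$-dominated convergence theorem. You have identified the right ingredients for both (approximation by $L^{\infty}$ plus continuity of $\alpha$, then the usual splitting over $E_n=\{|g_n|>\varepsilon\}$), but in a full write-up these should be stated and proved as lemmas. Part (2) is essentially the paper's argument run in the opposite direction: the paper shows $W\cap\mathbb B$ is $\|\cdot\|_2$-closed and convex, hence weakly $L^{2}$-closed, and observes that weak* convergence in $\mathbb B$ implies weak $L^{2}$ convergence; your Mazur-lemma version is equivalent (and your reduction of $W\cap M=(W\cap L^{\infty})\cap M$ is actually more faithful to the statement as written, since the paper silently replaces $M$ by $W\cap L^{\infty}$ in its proof of (2)).
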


\begin{proof}
1. It is clear that $M\subset[M]^{-\alpha}\cap L^{\infty}$. Assume, via
contradiction, that $w\in[M]^{-\alpha}\cap L^{\infty}$ and $w\notin M$. Since
$M$ is weak*-closed, there is an $F\in L^{1}$ such that $\int_{\mathbb{T}%
}Fwdm\neq0$ but $\int gFdm=0$ for every $g\in M.$ Since $k=\frac{1}{\left \vert
F\right \vert +1}\in L^{\infty}$ and $k^{-1}\in L^{1}$, it follows from Lemma
\ref{decomposition in L^alpha} that there is an $h\in H^{\infty}$, $1/h\in
H^{1}$ and a unimodular function $u$ such that $k=uh.$ Choose a sequence
$\left \{  h_{n}\right \}  $ in $H^{\infty}$ such that $\left \Vert
h_{n}-1/h\right \Vert _{1}\rightarrow0.$ Since $hF=\bar{u}k F =\bar{u}\frac
{F}{|F|+1}\in L^{\infty},$ we can conclude that
\[
\left \Vert h_{n}hF-F\right \Vert _{1}=\|h_{n}hF-\frac{1}{h}hF\|_1 \leq \left \Vert
h_{n}-\frac{1}{h}\right \Vert _{1}\left \Vert hF\right \Vert _{\infty}%
\rightarrow0.
\]
For each $n\in \mathbb{N}$ and every $g\in M,$ from Lemma \ref{translation lemma} we know that $h_{n}hg\in
H^{\infty}M\subset M$. Hence%
\[
\int_{\mathbb{T}}gh_{n}hF dm=\int_{\mathbb{T}}h_{n}hgFdm=0, \qquad \forall g\in
M.
\]
Suppose $g\in[M]^{-\alpha}.$ Then there is a sequence $\{g_{m}\}$ in $M$ such
that $\alpha(g_{m}-g)\rightarrow0$ as $m\rightarrow \infty.$ For each
$n\in \mathbb{N},$ it follows from $h_{n}hF\in H^{\infty}L^{\infty}\subset
L^{\infty}$ that%

\begin{align}
|\int_{\mathbb{T}}gh_{n}hFdm-\int_{\mathbb{T}}g_{m}h_{n}hFdm| &  \leq
\int_{\mathbb{T}}|(g-g_{m})h_{n}hF|dm\nonumber \\
&  \leq \Vert h_{n}hF\Vert_{\infty}\int_{\mathbb{T}}|g_{m}-g|dm\nonumber \\
&  =\Vert h_{n}hF\Vert_{\infty}\Vert g_{m}-g\Vert_{1}\nonumber \\
&  \leq \Vert h_{n}hF\Vert_{\infty}\alpha(g_{m}-g)\rightarrow0.\tag{$%
m\rightarrow \infty$}%
\end{align}
Thus
\[
\int_{\mathbb{T}}gh_{n}hFdm=\lim_{m\rightarrow \infty}\int_{\mathbb{T}}%
g_{m}h_{n}hFdm=0,\qquad \forall g\in \lbrack M]^{-\alpha}.
\]
In particular, $w\in \lbrack M]^{-\alpha}\cap L^{\infty}$ implies that
$\int_{\mathbb{T}}h_{n}hFwdm=\int_{\mathbb{T}}wh_{n}hFdm=0.$ Hence,
\begin{align*}
0 &  \neq \left \vert \int_{\mathbb{T}}Fwdm\right \vert \\
&  =\lim_{n\rightarrow \infty}\left \vert \int_{\mathbb{T}}Fwdm\right \vert \\
&  \leq \lim_{n\rightarrow \infty}\left \vert \int_{\mathbb{T}}Fw-h_{n}%
hFwdm\right \vert +\lim_{n\rightarrow \infty}\left \vert \int_{\mathbb{T}}%
h_{n}hFwdm\right \vert \\
&  \leq \lim_{n\rightarrow \infty}\left \Vert F-h_{n}hF\right \Vert _{1}\left \Vert
w\right \Vert _{\infty}+0 \\
&  =0,
\end{align*}
a contradiction. Hence $M=[M]^{-\alpha}\cap L^{\infty}$ $.$

2. Let $M=W\cap L^{\infty}.$ To prove $M$ is weak*-closed in $L^{\infty},$
using the Krein-Smulian theorem in Lemma \ref{Krein Smulian theorem}, we need only show that $M\cap \mathbb{B}$ is
weak*-closed. It is clear that $M\cap \mathbb{B}=W\cap \mathbb{B}.$ From part
(2) of Lemma \ref{property of unit ball}, $M\cap \mathbb{B}=W\cap \mathbb{B}$ is
$\alpha$-closed. Since $\alpha$ is continuous, it follows from part (1) of
Lemma \ref{property of unit ball} that $M\cap \mathbb{B}$ is $\left \Vert
{}\right \Vert _{2}$-closed. The fact that $M\cap \mathbb{B}$ is convex implies
$M$ is closed in the weak topology on $L^{2}.$ If $\left \{  f_{\lambda
}\right \}  $ is a net in $\mathbb{B}$ and  $f_{\lambda}\rightarrow f$ weak* in
$L^{\infty}$, then, for every $g\in L^{1},$ $\int_{T}\left(  f_{\lambda
}-f\right)  gdm\rightarrow0.$  But $L^{2}\subset L^{1}$, so $f_{\lambda
}\rightarrow f$ weakly in $L^{2}$. Hence $M\cap \mathbb{B}$ is weak*-closed in
$L^{\infty}.$

3. Since $W$ is $\alpha$-closed in $L^{\alpha},$ it is clear that
$W\supset \left[  W\cap L^{\infty}\right]  ^{-\alpha}.$ Suppose $f\in W$ and
let $k=\frac{1}{\left \vert f\right \vert +1}.$ Then $k\in L^{\infty}$ and
$k^{-1}\in L^{\alpha}.$ It follows from Lemma \ref{decomposition in L^alpha}
that there is an $h\in H^{\infty}$, $1/h\in H^{\alpha}$ and an unimodular
function $u$ such that $k=uh,$ so $hf=\bar{u}kf=\bar{u}\frac{f}{|f|+1}\in
L^{\infty}$ . There is a sequence $\left \{  h_{n}\right \}  $ in $H^{\infty}$
such that $\alpha \left(  h_{n}-\frac{1}{h}\right)  \rightarrow0$. For each
$n\in \mathbb{N},$ it follows from Lemma \ref{translation lemma} that $h_{n}hf\in H^{\infty}H^{\infty}W\subset W$ and
$h_{n}hf\in H^{\infty}L^{\infty}\subset L^{\infty},$ which implies that
$\left \{  h_{n}hf\right \}  $ is a sequence in $W\cap L^{\infty}.$ From part
(2) of Lemma \ref{property of alpha},
\[
\alpha \left(  h_{n}hf-f\right)  \leq \alpha \left(  h_{n}-\frac{1}{h}\right)
\left \Vert hf\right \Vert _{\infty}\rightarrow0.
\]
Thus $f\in \left[  W\cap L^{\infty}\right]  ^{-\alpha}$. Therefore $W=[W\cap
L^{\infty}]^{-\alpha}.$
\end{proof}

A quick corollary of the preceding result is the following conclusion, which
gives us a quick route from the $\left \Vert {}\right \Vert _{2}$-version of
invariant subspace structure  to the (weak*-closed) $\left \Vert {}\right \Vert
_{\infty}$-version of invariant subspace structure.

\begin{corollary}
\label{Lzz}A weak*-closed linear subspace $M$ of $L^{\infty}$ satisfies
$zM\subset M$ if and only if $M=\phi H^{\infty}$ for some unimodular function
$\varphi$ or $M=\chi_{E}L^{\infty}$ for some Borel subset $E$ of $\mathbb{T}$.
\end{corollary}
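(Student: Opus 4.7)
The plan is to apply Theorem~\ref{Saito} with the specific choice $\alpha=\|\cdot\|_{2}$, which is a continuous $\|\cdot\|_{1}$-dominating normalized gauge norm (Cauchy--Schwarz gives $\|f\|_{1}\leq\|f\|_{2}$, and $\alpha(\chi_{E})=m(E)^{1/2}\to 0$ as $m(E)\to 0$), and for which $L^{\alpha}=L^{2}$. This packages the passage from the $\|\cdot\|_{2}$-version of invariant subspace structure (Theorem~\ref{BHL}) to the weak*-closed $L^{\infty}$-version into a short bookkeeping argument.

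For the forward direction, suppose $M\subset L^{\infty}$ is weak*-closed with $zM\subset M$. Let $W=[M]^{-\|\cdot\|_{2}}$. Then $W$ is a closed subspace of $L^{2}$, and $zW\subset W$ by $\|\cdot\|_{2}$-continuity of multiplication by $z$. By the classical Beurling--Helson--Lowdenslager theorem (Theorem~\ref{BHL}), either $W=\phi H^{2}$ for some unimodular $\phi\in L^{\infty}$, or $W=\chi_{E}L^{2}$ for some Borel set $E\subset\mathbb{T}$. Applying part~(1) of Theorem~\ref{Saito} with $\alpha=\|\cdot\|_{2}$ yields $M=[M]^{-\|\cdot\|_{2}}\cap L^{\infty}=W\cap L^{\infty}$. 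The routine intersection identities $\phi H^{2}\cap L^{\infty}=\phi H^{\infty}$ (if $f=\phi g$ with $g\in H^{2}$ and $f\in L^{\infty}$, then $g=\bar{\phi}f$ satisfies $|g|=|f|\in L^{\infty}$, so $g\in H^{2}\cap L^{\infty}=H^{\infty}$) and $\chi_{E}L^{2}\cap L^{\infty}=\chi_{E}L^{\infty}$ then give the desired conclusion.

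For the converse direction, both $\phi H^{\infty}$ and $\chi_{E}L^{\infty}$ are clearly $z$-invariant. Weak*-closedness of $\chi_{E}L^{\infty}=\{f\in L^{\infty}:\chi_{E^{c}}f=0\}$ is immediate. For $\phi H^{\infty}=\{f\in L^{\infty}:\bar{\phi}f\in H^{\infty}\}$, the map $f\mapsto\bar{\phi}f$ is weak*-continuous on $L^{\infty}$ (its preadjoint is multiplication by $\bar{\phi}$ on $L^{1}$), so $\phi H^{\infty}$ is the preimage of the weak*-closed subspace $H^{\infty}$ under a weak*-continuous map, hence weak*-closed. The entire argument contains no genuine obstacle: all the heavy lifting has already been done inside Theorem~\ref{Saito}~(1) (where the Herglotz-kernel factorization of Proposition~\ref{decomposition in L^alpha} is used) and in the classical Theorem~\ref{BHL}, and the remaining steps are one-line verifications.
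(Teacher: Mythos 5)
Your proposal is correct and follows essentially the same route as the paper: the paper likewise applies the classical $\|\cdot\|_{2}$ Beurling--Helson--Lowdenslager theorem to $[M]^{-\|\cdot\|_{2}}$ and then invokes part (1) of Theorem~\ref{Saito} to recover $M=[M]^{-\|\cdot\|_{2}}\cap L^{\infty}$, concluding via the same intersection identities. Your extra verifications (that $\|\cdot\|_{2}$ lies in $\mathcal{N}_{c}$ and that the converse direction holds) are correct and only make explicit what the paper leaves implicit.
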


\begin{proof}
Since $zM\subset M,$ it is easy to check that $[M]^{-\left \Vert {}\right \Vert
_{2}}$ satisfies $z [M]^{-\left \Vert {}\right \Vert _{2}} \subset
[M]^{-\left \Vert {}\right \Vert _{2}} .$ Hence the Beurling-Helson-Lowdenslager
theorem for $\left \Vert {}\right \Vert _{2}$ (Theorem \ref{BHL}) implies
$[M]^{-\left \Vert {}\right \Vert _{2}}=$ $\varphi H^{2}$ for some unimodular
function $\varphi$ or $[M]^{-\left \Vert {}\right \Vert _{2}}=\chi_{E}L^{2}$ for
some Borel subset $E$ of $\mathbb{T}$. It follows from part (1) of Theorem
\ref{Saito} that $M= [M]^{-\left \Vert {}\right \Vert _{2}}\cap L^{\infty}$
equals $\varphi H^{2}\cap L^{\infty}=\varphi H^{\infty}$ or $\chi_{E}L^{2}\cap
L^{\infty}=\chi_{E}L^{\infty}.$
\end{proof}

The following theorem is the generalized Beurling-Helson-Lowdenslager theorem
for a continuous $\| \|_{1}$-dominating normalized gauge norm.

\begin{theorem}
\label{invariantthm} Suppose $\alpha$ is a continuous $\Vert \Vert_{1}%
$-dominating normalized gauge norm and $W$ is a closed subspace of $L^{\alpha
}$. Then $zW\subseteq W$ if and only if either $W=\phi H^{\alpha}$ for some
unimodular function $\phi$ or $W=\chi_{E}L^{\alpha}$ for some Borel set
$E\subset \mathbb{T}$. If $0\neq W\subset H^{\alpha}$, then $W=\varphi
H^{\alpha}$ for some inner function $\varphi$.
\end{theorem}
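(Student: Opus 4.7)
The plan is to reduce the theorem to Corollary \ref{Lzz} via Theorem \ref{Saito}; all of the heavy lifting has already been done. The \emph{if} direction is a quick check: both $\phi H^{\alpha}$ and $\chi_{E}L^{\alpha}$ are manifestly $z$-invariant, and both are $\alpha$-closed, since multiplication by a unimodular $\phi$ is an $\alpha$-isometry (because $\alpha$ depends only on absolute values, by parts (1)--(2) of Lemma \ref{property of alpha}) with $\alpha$-isometric inverse $\bar{\phi}$, while multiplication by $\chi_{E}$ is an $\alpha$-contractive idempotent and so has $\alpha$-closed range.

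For the \emph{only if} direction, let $W$ be $\alpha$-closed with $zW\subseteq W$, and set $M := W\cap L^{\infty}$. Then $zM\subseteq M$. By part (2) of Theorem \ref{Saito}, $M$ is weak$^{*}$-closed in $L^{\infty}$, and by part (3), $W = [M]^{-\alpha}$. Corollary \ref{Lzz} then forces $M = \phi H^{\infty}$ for some unimodular $\phi$, or $M = \chi_{E}L^{\infty}$ for some Borel $E\subseteq\mathbb{T}$. Taking $\alpha$-closures and using once more that multiplication by $\phi$ (respectively $\chi_{E}$) is an $\alpha$-isometry (respectively an $\alpha$-contractive projection), we obtain
$$[\phi H^{\infty}]^{-\alpha} \;=\; \phi\,[H^{\infty}]^{-\alpha} \;=\; \phi H^{\alpha}, \qquad [\chi_{E} L^{\infty}]^{-\alpha} \;=\; \chi_{E}\,[L^{\infty}]^{-\alpha} \;=\; \chi_{E} L^{\alpha},$$
which yields the stated dichotomy.

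For the $H^{\alpha}$ statement, suppose $0\neq W\subseteq H^{\alpha}$. The case $W = \chi_{E}L^{\alpha}$ would put $\chi_{E}\bar{z}\in W \subseteq H^{\alpha}\subseteq H^{1}$, but the $(-1)$-st Fourier coefficient of $\chi_{E}\bar{z}$ equals $m(E)$, which must then vanish, contradicting $W\neq 0$. Hence $W = \phi H^{\alpha}$ for some unimodular $\phi$, and $\phi = \phi\cdot 1 \in W \subseteq H^{\alpha}$. Combined with $|\phi|=1$ a.e.\ and Lemma \ref{intersection},
$$\phi \;\in\; H^{\alpha}\cap L^{\infty} \;=\; (H^{1}\cap L^{\alpha})\cap L^{\infty} \;=\; H^{1}\cap L^{\infty} \;=\; H^{\infty},$$
so $\phi$ is inner.

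The substantive obstacles are really upstream in Theorem \ref{Saito}; within the present argument the only delicate moment is ruling out $W = \chi_{E}L^{\alpha}$ when $W\subseteq H^{\alpha}$, which the one-line Fourier-coefficient computation above dispatches.
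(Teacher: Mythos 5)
Your proof is correct and follows essentially the same route as the paper: set $M = W\cap L^{\infty}$, invoke parts (2) and (3) of Theorem \ref{Saito}, and apply Corollary \ref{Lzz}. You spell out a few steps the paper leaves implicit --- the closure identities $[\phi H^{\infty}]^{-\alpha}=\phi H^{\alpha}$ and $[\chi_{E}L^{\infty}]^{-\alpha}=\chi_{E}L^{\alpha}$, the Fourier-coefficient argument ruling out $W=\chi_{E}L^{\alpha}$ inside $H^{\alpha}$, and the deduction $\phi\in H^{\alpha}\cap L^{\infty}=H^{\infty}$ --- but the substance and the structure of the argument are the same.
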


\begin{proof}
The ``only if" part is obvious. Let $M=W\cap L^{\infty
}$. It follows from part (2) of Theorem \ref{Saito} that $M$ is weak*-closed
in $L^{\infty}.$ Since $zW\subset W,$ it is easy to check that $zM\subset M.$
Then by Corollary \ref{Lzz}, we can conclude that $M=\phi H^{\infty}$ for some
unimodular function $\varphi$ or $M=\chi_{E}L^{\infty}$ for some Borel subset
$E$ of $\mathbb{T}$. It follows from part (3) of Theorem \ref{Saito} that
$W=[W\cap L^{\infty}]^{-\alpha}=[M]^{-\alpha},$ so $W=[\varphi H^{\infty
}]^{-\alpha}=\varphi H^{\alpha}$ for some
unimodular function $\phi$ or $W=[\chi_{E}L^{\infty}]^{-\alpha}=\chi
_{E}L^{\alpha}$ for some inner function $\varphi$. If $0\neq W\subset H^{\alpha}$, we must have $W=\varphi
H^{\alpha}$, so the unimodular function $\varphi=\varphi \cdot1\in \varphi
H^{\alpha}=W\subset H^{\alpha},$ which implies $\varphi$ is an inner function.
\end{proof}

\begin{remark} In Theorem \ref{invariantthm}, there are two situations of the invariant subspace structure as follows:
 \begin{enumerate}\item If $W=\chi
_{E}L^{\alpha},$ then $zW=W,$ which means that $W$ is a doubly invariant subspace in $L^\alpha.$

\item If $W=\varphi H^\alpha,$ then $zW\subsetneqq W.$ In fact, since the multiplication operator $M_\varphi$ is an isometry on $L^\alpha$ and $zH^\alpha\subsetneqq H^\alpha,$ we see $\varphi  zH^\alpha\subsetneqq \varphi H^\alpha,$ which means $zW=\varphi zH^\alpha\subsetneqq \varphi H^\alpha=W.$ This means that $W$ is a simply invariant subspace in $L^\alpha.$
\end{enumerate}\end{remark}

\section{Acknowledgements}

The author is grateful to her advisor,  Don Hadwin,  and to Professor Eric Nordgren, who introduced her to Hardy spaces. They both provided help and support.

\bigskip


\begin{thebibliography}{99}                                                                                               %


\bibitem {B}A. Beurling, On two problems concerning linear transformations in
Hilbert space, Acta Math. \textbf{81} (1949) 239--255.

\bibitem {AKS} S. V. Astashkin, N. Kalton, F. A. Sukochev, Cesaro mean convergence of martingale differences in rearrangement invariant spaces, Positivity, 12 (2008) 387--406.


\bibitem {BF}L. Brickman, P. A. Fillmore, The invariant subspace lattice of a
linear transformation, Canad. J. Math. 19 (1967) 810--822.

\bibitem {Chen}Y. Chen, Lebesgue and Hardy spaces for symmetric norms I,
arXiv: 1407.7920 [math. OA] (2014).

\bibitem {Chen2}Y. Chen, Function spaces based on symmetric norms (PhD
thesis), University of New Hampshire, 2014.

\bibitem {Duren}P. Duren, Theory of $H^{p}$ spaces, Academic Press, New York, 1970.

\bibitem {HL}H. Helson, D. Lowdenslager, Invariant subspaces, Proc. Internat.
Sympos. Linear Spaces, Jerusalem, 1960, 251--262. Pergamon Press, Oxford, 1961.



\bibitem {N}N. K. Nikol'ski\u{\i}, Treatise on the shift operator, Spectral
function theory, With an appendix by S. V. Hru\v{s}\v{c}ev and V. V. Peller,
Translated from the Russian by Jaak Peetre, Grundlehren der Mathematischen
Wissenschaften, 273. Springer-Verlag, Berlin, 1986.

\bibitem {S}D. Sarason, Weak-star density of polynomials. J. Reine Angew.
Math. 252 (1972) 1--15.

\bibitem {Sr1}T. P. Srinivasan, Simply invariant subspaces, Bull. Amer. Math.
Soc. \textbf{69} (1963) 706--709.

\bibitem {Sr2}T. P. Srinivasan, Doubly invariant subspaces, Pacific J. Math.
\textbf{14} (1964) 701--707.

\bibitem {Sr3}T. P. Srinivasan, Simply invariant subspaces and generalized
analytic functions, Proc. Amer. Math. Soc. \textbf{16} (1965) 813--818.
\end{thebibliography}
\end{document}